\newtheorem{thm}{Theorem}[section]
\newtheorem{corollary}[thm]{Corollary}
\newtheorem{lemma}[thm]{Lemma}
\newtheorem{proposition}[thm]{Proposition}
\theoremstyle{definition}
\newtheorem{example}[thm]{Example}
\theoremstyle{remark}
\newtheorem{remark}[thm]{Remark}
\DeclareMathOperator{\Ze}{Z}
\DeclareMathOperator{\J}{Jac}
\DeclareMathOperator{\id}{id}
\DeclareMathOperator{\Ker}{Ker}
\DeclareMathOperator{\Spec}{Spec}
\DeclareMathOperator{\m}{\frak{m}}
\DeclareMathOperator{\nil}{Nil}
\newcommand{\field}[1]{\mathbb{#1}}
\newcommand{\Q }{\field{Q}}
\newcommand{\Z }{\field{Z}}
\begin{document}

\title[Bi-Amalgamated algebras along  ideals]{Bi-Amalgamated algebras along  ideals $^{(\star)}$}
\thanks{$^{(\star)}$ Supported by King Fahd University of Petroleum \& Minerals  under Research Grant \# RG1310.}

\author[S. Kabbaj]{S. Kabbaj $^{(1)}$}\thanks{$^{(1)}$ Corresponding author.}
\address{Department of Mathematics and Statistics, King Fahd University of Petroleum \& Minerals, Dhahran 31261, KSA}
\email{kabbaj@kfupm.edu.sa}

\author[K. Louartiti]{K. Louartiti}
\address{Department of Mathematics, Faculty of Science and Technology, Box 2202, University S. M. Ben Abdellah, Fez, Morocco.}
\email{lokha2000@hotmail.com}

\author[M. Tamekkante]{M. Tamekkante}
\address{Laboratory of Mathematics, Computing and Application, Department of Mathematics, Faculty
of Science, Box 1014, University Mohammed V--Agdal, Rabat, Morocco.}
\email{tamekkante@yahoo.fr}

\date{\today}

\subjclass[2010]{13F05, 13A15, 13E05, 13F20, 13C10, 13C11, 13F30, 13D05}

\begin{abstract}
Let $f: A\rightarrow B$ and $g: A\rightarrow C$ be two commutative ring homomorphisms and let $J$ and $J'$ be
two ideals of $B$ and $C$, respectively, such that $f^{-1}(J)=g^{-1}(J')$. The \emph{bi-amalgamation} of $A$ with $(B, C)$ along $(J, J')$ with respect to $(f,g)$ is the subring of $B\times C$ given by
$$A\bowtie^{f,g}(J,J'):=\big\{(f(a)+j,g(a)+j') \mid a\in A, (j,j')\in J\times J'\big\}.$$
 This paper investigates ring-theoretic properties of \emph{bi-amalgamations} and capitalizes on previous works carried on various settings of pullbacks and amalgamations. In the second and third sections, we provide examples of bi-amalgamations and show how these constructions arise as pullbacks. The fourth section investigates the transfer of some basic ring theoretic properties to bi-amalgamations and the fifth section is devoted to the prime ideal structure of these constructions.  All new results agree with recent studies in the literature on D'Anna-Finocchiaro-Fontana's amalgamations and duplications.
\end{abstract}

\maketitle

\section{Introduction}

\noindent Throughout, all rings considered are commutative with unity and all modules are unital. The following diagram of ring homomorphisms
$$\xymatrix{ R \ar[d]^{\mu_2}\ar[r]^{\iota_2} & \ar[d]^{\mu_1}T\\A \ar[r]^{\iota_1}&  B}$$
is called  the pullback (or fiber product) of $\mu_1$ and $\iota_1$ if the homomorphism $\iota_2\times \mu_2: R\rightarrow T\times A$, $r\mapsto (\iota_2(r),\mu_2(r))$ induces an isomorphism of $R$ onto the subring of $T\times A$ given by $$\mu_1\times_{B}\iota_1:=\big\{(t,a)\mid \mu_1(t)=\iota_1(a)\big\}.$$
If $\mu_1$ is surjective and $\iota_1$ is injective, the above diagram is called  a conductor square. In this setting, $\iota_2$ and $\mu_2$ are injective and surjective, respectively, and    $\Ker(\mu_1)\cong\Ker(\mu_2)$. By abuse of notation, we view $R$ as a subring of $T$ making $\Ker(\mu_1)=\Ker(\mu_2)$ the largest common ideal of $R$ and $T$; it is called the conductor of $T$ into $R$.

\emph{Amalgamated algebras} are rings which arise as special pullbacks. Their introduction in 2007 by D'Anna and Fontana \cite{DF1,DF2} was motivated by a construction of D. D. Anderson \cite{An} related to a classical construction due to Dorroh \cite{Do} on endowing a ring (without unity) with a unity. The interest of these amalgamations resides, partly, in their ability to cover several basic constructions in commutative algebra, including classical pullbacks (e.g., $D+M$, $A+XB[X]$, $A+XB[[X]]$, etc.), Nagata's idealizations \cite{Hu,N} (also called trivial ring extensions which have been widely studied in the literature), and Boisen-Sheldon's CPI-extensions \cite{BSh2}. The following paragraphs collect background and main contributions on amalgamations.

Let $A$ be a ring, $I$ an ideal of $A$, and $\pi:A\rightarrow \frac{A}{I}$ the canonical surjection. The amalgamated duplication of $A$ along $I$, denoted by $A\bowtie I$, is the special pullback  of $\pi$ and $\pi$; i.e., the subring  of $A \times A$ given by $$A\bowtie I:=\pi\times_{\frac{A}{I}}\pi=\big\{(a,a+i)\mid a\in A, i\in I\big\}.$$
If $I^{2} = 0$, then $A\bowtie I$ coincides with Nagata's idealization $A\ltimes I$.

In 2007, the construction $A\bowtie I$ was introduced and its basic properties were studied by D'Anna and Fontana in \cite{DF1,DF2}. In the firs paper \cite{DF1}, they discussed the main properties of the amalgamated duplication in relation with pullback constructions and special attention was devoted to its ideal-theoretic properties as well as to the topological structure of its prime spectrum. In the second paper \cite{DF2}, they restricted their attention to the case where $I$ is a multiplicative canonical ideal of $A$, that is, $I$ is regular and every regular fractional ideal $J$ of $R$ is $I$-reflexive (i.e., $J=(I :(I :J))$). In particular, they examined contexts where every regular fractional ideal of $A\bowtie I$ is divisorial. Later in the same year, the amalgamated duplication was investigated by D'Anna in \cite{D} with the aim of applying it to curve singularities (over algebraic closed fields) where he proved that the amalgamated duplication of an algebroid curve along a regular canonical ideal yields a Gorenstein algebroid curve \cite[Theorem 14 and Corollary 17]{D}. In 2008, Maimani and Yassemi studied in \cite{MY} the diameter and girth of the zero-divisor graph of an amalgamated duplication. In 2010, Shapiro \cite{Sh} corrected Proposition 3 in \cite{D} and proved a pertinent result asserting that if $A$ is a one-dimensional reduced local Cohen-Macaulay ring and $A\bowtie I$ is Gorenstein, then $I$ must be regular. In 2012, in \cite{CJKM}, the authors established necessary and sufficient conditions for an amalgamated duplication of a ring along an ideal to inherit Pr\"ufer conditions (which extend the notion of Pr\"ufer domain to commutative rings with zero divisors). The new results yielded original and new families of examples issued from amalgamated duplications subject to various Pr\"ufer conditions.

In 2009 and 2010, D'Anna, Finocchiaro, and Fontana considered the more general context of amalgamated algebra $$A\bowtie^{f} J:=\big\{(a,f(a)+j)\mid a\in A, j\in J\big\}$$ for a given homomorphism of rings $f: A\rightarrow B$ and ideal $J$ of $B$. In particular, they have studied these amalgamations in the frame of pullbacks which allowed them to establish numerous (prime) ideal and ring-theoretic basic properties for this new construction. In \cite{DFF1}, they provided necessary and sufficient conditions for $A\bowtie^{f} J$ to inherit the notions of Noetherian ring, domain, and reduced ring and characterized pullbacks that can be expressed as amalgamations. In \cite{DFF2}, they provided a complete description of the prime spectrum of $A\bowtie^{f} J$ and gave bounds for its Krull dimension.

Let $\alpha: A\rightarrow C$, $\beta: B\rightarrow C$ and $f:A\rightarrow B$ be ring homomorphisms.  In the aforementioned papers \cite{DFF1,DFF2}, the authors studied amalgamated algebras within the frame of pullbacks $\alpha\times \beta$ such that $\alpha=\beta\circ f$ \cite[Propositions 4.2 and 4.4]{DFF1}. In this work, we are interested in new constructions, called \emph{bi-amalgamated algebras} (or \emph{bi-amalgamations}), which arise as pullbacks $\alpha \times \beta$ such that the following diagram of ring homomorphisms
$$\xymatrix{ A \ar[d]^{g}\ar[r]^{f} & \ar[d]^{\alpha}B\\ C \ar[r]^{\beta}&  D}$$
is commutative with $\alpha\circ \pi_B(\alpha\times \beta)=\alpha\circ f(A)$, where $\pi_B$ denotes the canonical projection of $B\times C$ over $B$. Namely, let $f: A\rightarrow B$ and $g: A\rightarrow C$ be two ring homomorphisms and let $J$ and $J'$ be
two ideals of $B$ and $C$, respectively, such that $f^{-1}(J)=g^{-1}(J')$. The \emph{bi-amalgamation of $A$ with $(B, C)$ along $(J, J')$ with respect to $(f,g)$} is the subring of $B\times C$ given by
$$A\bowtie^{f,g}(J,J'):=\big\{(f(a)+j,g(a)+j') \mid a\in A, (j,j')\in J\times J'\big\}.$$

 This paper investigates ring-theoretic properties of \emph{bi-amalgamations} and capitalizes on previous works carried on various settings of pullbacks and amalgamations. In the second and third sections, we provide examples of bi-amalgamations and show how these constructions arise as pullbacks. The fourth section investigates the transfer of some basic ring theoretic properties to bi-amalgamations and the fifth section is devoted to the prime ideal structure of these constructions.  All new results agree with recent studies in the literature on D'Anna-Finocchiaro-Fontana's amalgamations and duplications.

 Throughout, for a ring $R$, $Q(R)$ will denote the total ring of quotients and $\Ze(R)$ and $\J(R)$ will denote, respectively, the set of zero divisors and Jacobson radical of $R$. Finally, $\Spec(R)$ shall denote the set of prime ideals of $R$.

\section{Examples of bi-amalgamations}

Notice, first, that every amalgamated duplication is an amalgamated algebra and every amalgamated algebra is a bi-amalgamated algebra, as seen below.

\begin{example}[The amalgamated algebra]\label{aa}  Let $f:A\rightarrow B$ be a ring homomorphism and $J$ an ideal of $B$.
Set $I:=f^{-1}(J)$ and $\iota:=\id_A$. Thus,
\begin{eqnarray*}
  A\bowtie^{\iota, f}(I,J) &=& \big\{(a+i,f(a)+j) \mid  a\in A , (i,j)\in I\times J\big\} \\
   &=&\big\{(a+i,f(a+i)+j-f(i))\mid  a\in A , (i,j)\in I\times J\big\} \\
   &=& \big\{(a,f(a)+j)\mid  a\in A , j\in J\big\} \\
   &=& A\bowtie^fJ.
\end{eqnarray*}
\end{example}

Further, the subring $f(A)+J$ of $B$ can be regarded as a bi-amalgamation; precisely:

\begin{remark}\label{f(A)+J}
Let $f:A\rightarrow B$ be a ring homomorphism and $J$ and ideal of $B$. Set $I:=f^{-1}(J)$ and consider the canonical projection $\pi: A\rightarrow A/I$. Then, one can easily check that
\begin{eqnarray*}
f(A)+J      &\cong  &\big\{(\bar{a},f(a)+j)\mid  a\in A , j\in J\big\} \\
            &=      &A\bowtie^{\pi,f}(0,J).
\end{eqnarray*}
\end{remark}

 In particular, Boisen-Sheldon's CPI-extensions \cite{BSh2} can also be viewed as bi-amalgamations.

\begin{example}[The CPI-extension]
Let $A$ be a ring and let $I$ be an ideal of $A$. Then $\overline{S}:=(A/I)\setminus\Ze(A/I)$ and $S:=\{s\in A \mid \bar{s}\in \overline{S}\}$ are multiplicatively closed subsets of $A/I$ and $A$, respectively. Let $\varphi: S^{-1}A\rightarrow Q(A/I)=(\overline{S})^{-1}(A/I)$ and $f: A\rightarrow S^{-1}A$ be the canonical ring homomorphisms. Then, the subring $$C(A,I):=\varphi^{-1}(A/I)=f(A) + S^{-1}I$$ of $S^{-1}A$ is called the CPI-extension of $A$ with respect to $I$ (in the sense of Boisen-Sheldon). Now, let $\pi: A\rightarrow A/I$ be the canonical projection. From Remark~\ref{f(A)+J}, we have
$$A\bowtie^{\pi,f}(0, S^{-1}I)\cong f(A) + S^{-1}I=C(A,I).$$
\end{example}

Other known families of rings stem from Remark~\ref{f(A)+J}; namely, those issued from extensions of rings $A\subset B$ (including classic pullbacks).

\begin{example}[The ring $A+J$]
Let $i:A\hookrightarrow B$ be an embedding of rings, $J$ and ideal of $B$, $I:=A\cap J$, and  $\pi: A\rightarrow A/I$ the canonical projection. From Remark~\ref{f(A)+J}, the subring  $A+J$ of $B$ can arise as a bi-amalgamation via $$A+J\cong A\bowtie^{\pi,i}(0,J)$$
and, consequently, so do most classic pullback constructions such as $A+XB[X]$ (via $A\subset B[X]$ and $XB[X]$), $A+XB[[X]]$  (via $A\subset B[[X]]$ and $XB[[X]]$), and $D+M$ (via $D\subset T$ and $M$ ideal of $T$ with $D\cap M=0$).
\end{example}

In the next section, as an application of Proposition~\ref{cara}, we will see that some
glueings of prime ideals \cite{Ped,Tam,Tra,Yan} can be viewed as bi-amalgamations. We
close this section with an explicit (non-classic pullback) example; namely, the ring
$R:=\Z[X]+(X^{2}+1)\Q[X]$ which lies between $\Z[X]$ and $\Q[X]$.

\begin{example}
Let $i:\Z[X]\hookrightarrow \Q[X]$ be the natural embedding and consider the ring homomorphism $\pi:\Z[X]\rightarrow \Z[i]$, $p(X)\mapsto p(i)$. Clearly, $(X^2+1)\Q[X]\cap \Z[X]=(X^2+1)$ and $\dfrac{\Z[X]}{(X^2+1)}\cong\Z[i]$ so that $$R:=\Z[X]+(X^{2}+1)\Q[X]\cong \Z[X]\bowtie^{\pi,i}\big(0,(X^2+1)\Q[X]\big).$$
\end{example}

\section{Pullbacks and bi-amalgamations}

\noindent Throughout, let $f: A\rightarrow B$ and $g: A\rightarrow C$ be two ring homomorphisms and $J,J'$ two ideals of $B$ and $C$, respectively, such that $I:=f^{-1}(J)=g^{-1}(J')$. Let $A\bowtie^{f,g}(J,J')$ denote the bi-amalgamation of $A$ with $(B, C)$ along $(J, J')$ with respect to $(f,g)$.

This section sheds light on the correlation between pullback constructions and bi-amalgamations. We first show how every bi-amalgamation can arise as a natural pullback.

\begin{proposition}\label{prop1}
Consider the ring homomorphisms  $\alpha: f(A)+J\rightarrow A/I$, $f(a)+j\mapsto \bar{a}$ and $\beta: g(A)+J'\rightarrow A/I$, $g(a)+j'\mapsto \bar{a}$. Then, the bi-amalgamation is determined by the following pullback
$$\xymatrix{
A\bowtie^{f,g}(J,J')    \ar@{>>}[d]  \ar@{>>}[r]    & \ar[d]^{\alpha}f(A)+J\\
g(A)+J'                 \ar[r]^{\beta}              &  A/I
}$$
that is
$$A\bowtie^{f,g}(J,J')=\alpha\times_{\frac{A}{I}}\beta.$$
\end{proposition}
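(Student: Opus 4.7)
The plan is to unwind the pullback on the right-hand side and match it, element by element, with the defining set of $A\bowtie^{f,g}(J,J')$ inside $B\times C$. The whole argument hinges on the standing hypothesis $f^{-1}(J)=g^{-1}(J')=I$, which plays a double role: first in making $\alpha$ and $\beta$ well-defined, and second in producing a common preimage $a\in A$ in the reverse inclusion.

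First, I would verify that $\alpha$ and $\beta$ are well-defined (surjective) ring homomorphisms. For $\alpha$: if $f(a_1)+j_1=f(a_2)+j_2$, then $f(a_1-a_2)=j_2-j_1\in J$, so $a_1-a_2\in f^{-1}(J)=I$ and $\overline{a_1}=\overline{a_2}$ in $A/I$. The argument for $\beta$ is symmetric, using $g^{-1}(J')=I$. Multiplicativity and additivity are routine. Once $\alpha$ and $\beta$ are in hand, the pullback
$$\alpha\times_{A/I}\beta=\bigl\{(x,y)\in (f(A)+J)\times(g(A)+J')\mid \alpha(x)=\beta(y)\bigr\}$$
makes sense as a subring of $B\times C$.

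The inclusion $A\bowtie^{f,g}(J,J')\subseteq \alpha\times_{A/I}\beta$ is immediate: for $(f(a)+j,g(a)+j')$ in the bi-amalgamation one has $\alpha(f(a)+j)=\bar a=\beta(g(a)+j')$, and the two coordinates lie in $f(A)+J$ and $g(A)+J'$ respectively by construction. This also records that the outer square commutes.

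For the reverse inclusion, take $(x,y)\in (f(A)+J)\times(g(A)+J')$ with $\alpha(x)=\beta(y)$, and write $x=f(a_1)+j_1$, $y=g(a_2)+j_2'$. The equality $\overline{a_1}=\overline{a_2}$ gives $a_1-a_2\in I$, hence $f(a_1-a_2)\in J$. Setting $a:=a_2$, $j:=f(a_1-a_2)+j_1\in J$, and $j':=j_2'\in J'$, we can rewrite
$$(x,y)=\bigl(f(a)+j,\,g(a)+j'\bigr)\in A\bowtie^{f,g}(J,J').$$
I expect no serious obstacle here; the only subtle point is keeping track of which hypothesis ($f^{-1}(J)=I$ versus $g^{-1}(J')=I$) is used where, since the well-definedness of $\alpha,\beta$ and the change-of-representative trick in the last step rely on exactly the same fact.
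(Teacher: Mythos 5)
Your proposal is correct and follows essentially the same argument as the paper: check that $\alpha,\beta$ are well defined via $f^{-1}(J)=g^{-1}(J')=I$, note the easy inclusion, and for the reverse inclusion use $a_1-a_2\in I$ to absorb $f(a_1-a_2)$ (resp.\ $g(b-a)$ in the paper's version) into the ideal component so that both coordinates share a common preimage. The only difference is the cosmetic choice of which coordinate to rewrite.
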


\begin{proof}
Note that the mappings $\alpha$ and $\beta$ are well defined since $I:=f^{-1}(J)=g^{-1}(J')$ and are ring homomorphisms. Further, the inclusion $A\bowtie^{f,g}(J,J')\subseteq \alpha\times \beta $ is trivial. On the other hand,
$$\alpha\times_{\frac{A}{I}}\beta=\big\{(f(a)+j,g(b)+j')\mid a,b\in A,\ (j,j')\in J\times J',\ \alpha(a)=\beta(b)\big\}.$$
The condition $\alpha(a)=\beta(b)$ means that $f(b-a)\in J$ and $g(b-a)\in J'$. It follows that $g(b)+j'=g(a)+(j'+g(b-a))$ with $j'+g(b-a)\in J'$. Therefore, $\alpha\times \beta \subseteq A\bowtie^{f,g}(J,J')$.
\end{proof}

 Next, we see how bi-amalgamations  can be represented as conductor squares.

\begin{proposition}\label{conductor}
Consider the following ring homomorphisms
$$\begin{array}{cccc}
  \iota_1:  & \dfrac{A}{I}  & \longrightarrow   & \displaystyle\frac{f(A)+J}{J}\times \frac{g(A)+J'}{J'} \\
            & \bar{a}       & \longmapsto       &  \left(\overline{f(a)},\overline{g(a)}\right)\\
            &&&\\
 \mu_2:     & A\bowtie^{f,g}(J,J')  & \longrightarrow   & \dfrac{A}{I} \\
            & (f(a)+j,g(a)+j')      & \longmapsto       &  \bar{a}
\end{array}$$
Then, the following diagram
$$\xymatrix@R=1.5cm  @C=1.5cm{ A\bowtie^{f,g}(J,J') \ar@{>>}[d]^{\mu_2}\ar[r]^{\iota_2} & \ar@{>>}[d]^{\mu_1}(f(A)+J)\times (g(A)+J')\\
\displaystyle\frac{A}{I}\ar[r]^{\iota_1}&  \displaystyle \frac{f(A)+J}{J}\times \frac{g(A)+J'}{J'}
}$$
is a conductor square with conductor $\Ker(\mu_1)=J\times J'$, where $\iota_2$ is the natural embedding and $\mu_1$ is the canonical surjection.
\end{proposition}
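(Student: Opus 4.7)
The plan is to verify, step by step, the three defining features of a conductor square as formulated in the introduction (i.e.\ a commutative pullback with $\mu_1$ surjective and $\iota_1$ injective), and then to read off the conductor. The right-hand vertical arrow $\mu_1$ is the componentwise canonical projection onto a quotient by an ideal, so it is surjective with kernel transparently equal to $J\times J'$. The top arrow $\iota_2$ is a natural inclusion, hence injective. So the two non-trivial tasks are (a) showing that $\iota_1$ is a well-defined injective ring homomorphism, and (b) showing that the square is a pullback.

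For (a), I would consider the ring homomorphism $A\to \frac{f(A)+J}{J}\times\frac{g(A)+J'}{J'}$ given by $a\mapsto(\overline{f(a)},\overline{g(a)})$. Its kernel is $f^{-1}(J)\cap g^{-1}(J')$, which by the standing hypothesis $f^{-1}(J)=g^{-1}(J')=I$ equals $I$. Consequently the map factors through $A/I$ and induces an injective homomorphism, which is exactly $\iota_1$. The commutativity $\mu_1\circ\iota_2=\iota_1\circ\mu_2$ is then checked on a generator $(f(a)+j,g(a)+j')$, where both sides evaluate to $(\overline{f(a)},\overline{g(a)})$. Note that this step uses the hypothesis $I=f^{-1}(J)=g^{-1}(J')$ in both directions: one containment makes $\iota_1$ well defined, the other makes it injective.

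For (b), rather than re-running a pullback verification from scratch, I would reduce to Proposition~\ref{prop1}. Since $\iota_1$ is injective, the pullback $\mu_1\times\iota_1$ is in bijection (via the projection to $(f(A)+J)\times(g(A)+J')$) with the set of pairs $(x,y)$ such that $\mu_1(x,y)\in\Ima(\iota_1)$. Writing $x=f(a_1)+j\in f(A)+J$, one has $\overline{x}=\overline{f(a)}$ in $(f(A)+J)/J$ iff $f(a_1-a)\in J$ iff $a_1-a\in f^{-1}(J)=I$ iff $\alpha(x)=\bar a$ in $A/I$; the symmetric equivalence holds for $y$ with $\beta$. Hence $\mu_1(x,y)\in\Ima(\iota_1)$ iff $\alpha(x)=\beta(y)$. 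The pullback therefore coincides with $\alpha\times_{A/I}\beta$, which is $A\bowtie^{f,g}(J,J')$ by Proposition~\ref{prop1}, and the two projections match $\iota_2$ and $\mu_2$.

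With the conductor-square structure established, the conductor is by definition $\Ker(\mu_1)=J\times J'$ (and one checks en passant that $\Ker(\mu_2)=J\times J'$ as well, consistent with the isomorphism $\Ker(\mu_1)\cong\Ker(\mu_2)$ recalled in the introduction). The only mildly delicate point is step~(b), and invoking Proposition~\ref{prop1} bypasses any redundant bookkeeping; everything else is a routine unwinding of definitions.
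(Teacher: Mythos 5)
Your proof is correct, and on the one non-routine step it takes a genuinely different (if mild) route from the paper's. The paper establishes the pullback property from scratch: it checks by a direct element chase that $\iota_2\times\mu_2$ carries $A\bowtie^{f,g}(J,J')$ into the fiber product $\mu_1\times\iota_1$, and conversely that every element $\big((f(a)+j,g(a')+j'),\bar b\big)$ of the fiber product is attained, by rewriting the first coordinate as $\big(f(b)+f(a-b)+j,\,g(b)+g(a'-b)+j'\big)$ using $f(a-b)\in J$ and $g(a'-b)\in J'$. You instead observe that, since $\iota_1$ is injective, $\mu_1\times\iota_1$ projects isomorphically onto $\{(x,y)\mid \mu_1(x,y)\in\Ima(\iota_1)\}$, and that this membership condition is exactly $\alpha(x)=\beta(y)$ for the maps $\alpha,\beta$ of Proposition~\ref{prop1}, so the conclusion follows from that proposition. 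Your reduction buys economy and conceptual clarity (the element-level bookkeeping is done once, in Proposition~\ref{prop1}, and the agreement of the two fiber-product descriptions becomes transparent), at the cost of depending on the earlier result, whereas the paper's verification is self-contained. The remaining points (surjectivity and kernel of $\mu_1$, injectivity of $\iota_1$, well-definedness of $\iota_1$ and $\mu_2$ via $I=f^{-1}(J)=g^{-1}(J')$) are handled in essentially the same routine way in both arguments; note that your identification of $\mu_2$ with the second projection of the pullback in fact disposes of its well-definedness, which the paper merely asserts.
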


\begin{proof}
The mappings $\iota_1$ and $\mu_2$ are well defined since $I=f^{-1}(J)=g^{-1}(J')$ and are ring homomorphisms. Next,
set $R:=\mu_1\times \iota_1$ and let $a\in A$ and $(j,j')\in J\times J'$. Then
$$\iota_2\times \mu_2\big((f(a)+j,g(a)+j')\big)=\big((f(a)+j,g(a)+j'),\bar{a}\big)$$
with
$$\mu_1\big((f(a)+j,g(a)+j')\big)=\big(\overline{f(a)},\overline{g(a)}\big)=\iota_1(\bar{a}).$$
Thus, $\iota_2\times \mu_2\big(A\bowtie^{f,g}(J,J')\big)\subseteq R$. Now, let $\big((f(a)+j,g(a')+j'),\bar{b}\big) \in R$. Then
$$\big(\overline{f(a)},\overline{g(a')}\big)=\big(\overline{f(b)},\overline{g(b)}\big).$$
Hence, $f(a-b)\in J$ and $g(a'-b)\in J'$. Whence,
$$\iota_2\times \mu_2\big((f(b)+f(a-b)+j,g(b)+g(a'-b)+j')\big)=\big((f(a)+j,g(a')+j'),\bar{b}\big).$$
It follows that $\iota_2\times \mu_2$ induces an isomorphism of $A\bowtie^{f,g}(J,J')$ onto $R$ since $\iota_2\times \mu_2$ is injective. Consequently, the above diagram is a pullback. Moreover, it is clear that $\iota_1$ is injective and that $\Ker(\mu_1)=J\times J'=\Ker(\mu_2)$.
\end{proof}

The next result characterizes pullbacks that can arise as bi-amalgamations.

\begin{proposition}\label{cara}
Consider the following diagram
$$\xymatrix{ A \ar[d]^{g}\ar[r]^{f} & \ar[d]^{\alpha}B\\ C \ar[r]^{\beta}&  D}$$
of ring homomorphisms and let $\pi:B\times C\rightarrow B$  be the canonical projection. Then, the following conditions are equivalent:
\begin{enumerate}
\item $\alpha\times_D \beta=A\bowtie^{f,g}(J,J')$, for some ideals $J$ of $B$ and $J'$ of $C$ with $f^{-1}(J)=g^{-1}(J')$;
\item The above diagram is commutative with $\alpha\circ \pi(\alpha\times_D \beta)=\alpha\circ f(A)$.
\end{enumerate}
\end{proposition}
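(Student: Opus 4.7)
My plan is to handle each direction by a short direct computation, with the key idea being to take the kernels $\ker\alpha$ and $\ker\beta$ as the canonical choice of $J$ and $J'$ in the $(2)\Rightarrow(1)$ direction.

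For $(1)\Rightarrow(2)$: suppose $\alpha\times_D\beta=A\bowtie^{f,g}(J,J')$ for suitable ideals. Commutativity is immediate, since for every $a\in A$ the element $(f(a),g(a))$ lies in $A\bowtie^{f,g}(J,J')$, hence in the pullback, forcing $\alpha(f(a))=\beta(g(a))$. For the second condition, observe that $\pi(A\bowtie^{f,g}(J,J'))=f(A)+J$, so $\alpha\circ\pi(\alpha\times_D\beta)=\alpha(f(A))+\alpha(J)$. Thus it suffices to show $\alpha(J)=0$; but taking $a=0$ and $j'=0$, the element $(j,0)$ belongs to $A\bowtie^{f,g}(J,J')=\alpha\times_D\beta$ for every $j\in J$, so $\alpha(j)=\beta(0)=0$.

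For $(2)\Rightarrow(1)$: set $J:=\ker\alpha$ and $J':=\ker\beta$. Commutativity gives $\alpha\circ f=\beta\circ g$, hence $f^{-1}(J)=\ker(\alpha\circ f)=\ker(\beta\circ g)=g^{-1}(J')$, so the bi-amalgamation $A\bowtie^{f,g}(J,J')$ is defined. The inclusion $A\bowtie^{f,g}(J,J')\subseteq\alpha\times_D\beta$ is immediate from commutativity. For the reverse inclusion, take $(b,c)\in\alpha\times_D\beta$. Then $\alpha(b)\in\alpha\circ\pi(\alpha\times_D\beta)=\alpha(f(A))$ by hypothesis, so $\alpha(b)=\alpha(f(a))$ for some $a\in A$, whence $j:=b-f(a)\in\ker\alpha=J$. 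Then $\beta(c)=\alpha(b)=\alpha(f(a))=\beta(g(a))$, so $j':=c-g(a)\in\ker\beta=J'$, and $(b,c)=(f(a)+j,g(a)+j')$ lies in $A\bowtie^{f,g}(J,J')$.

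I do not expect a genuine obstacle here; the only subtlety is recognizing that the second half of condition (2) is exactly what is needed to lift an arbitrary element of the pullback back to $f(A)$ modulo $\ker\alpha$, and that the choice $J=\ker\alpha$, $J'=\ker\beta$ is the natural one making the argument go through in both directions.
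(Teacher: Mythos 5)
Your proof is correct and follows essentially the same route as the paper's: in $(1)\Rightarrow(2)$ you use the elements $(f(a),g(a))$ and $(j,0)$ exactly as the paper does, and in $(2)\Rightarrow(1)$ you make the same canonical choice $J=\Ker(\alpha)$, $J'=\Ker(\beta)$ and run the same two inclusions. The only cosmetic difference is that you obtain $f^{-1}(J)=g^{-1}(J')$ slightly more slickly as $\Ker(\alpha\circ f)=\Ker(\beta\circ g)$, where the paper chases elements.
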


\begin{proof}
$(1)\Rightarrow(2)$ Let $a\in A$. By hypothesis, $(f(a),g(a))\in \alpha\times_D \beta$ so that $\alpha\circ f(a)=\beta\circ g(a)$. Also, we have $\pi(\alpha\times_D \beta)=f(A)+J$. Further, for any $j\in J$, the fact $(j,0)\in A\bowtie^{f,g}(J,J')$ yields $\alpha(j)=\beta(0)=0$. Therefore, $\alpha\circ \pi(\alpha\times_D \beta)=\alpha\circ f(A)$, as desired.

$(2)\Rightarrow(1)$ Let $J:=\Ker(\alpha)$ and $J':=\Ker(\beta)$.  By assumption, for each  $x\in f^{-1}(J)$, $\beta\circ g(x)=\alpha\circ f(x)=0$. Then, $g(x)\in J'$ and hence $f^{-1}(J)\subseteq g^{-1}(J')$. Likewise for the reverse inclusion. Hence $f^{-1}(J)=g^{-1}(J')$. Next, let $(f(a)+j,g(a)+j')\in A\bowtie^{f,g}(J,J')$. We have
$$\alpha(f(a)+j)=\alpha\circ f(a)=\beta\circ g(a)=\beta(g(a)+j')$$
so that $A\bowtie^{f,g}(J,J')\subseteq\alpha\times_D \beta$. On the other hand, let $(b,c)\in \alpha\times_D \beta$. By assumption, there exists $a\in A$ such that
$$\alpha(b)=\alpha\circ \pi(b,c)=\alpha(f(a)).$$
Then, $b-f(a)\in J$. Moreover, we have
$$\beta(c)=\alpha(b)=\alpha(f(a))=\beta(g(a)).$$
Then, $c-g(a)\in J'$. It follows that
$$(b,c)=(f(a)+b-f(a),g(a)+c-g(a)\in A\bowtie^{f,g}(J,J').$$
Consequently, $\alpha\times_D \beta=A\bowtie^{f,g}(J,J')$, completing the proof of the proposition.
\end{proof}

In view of Example~\ref{aa}, Proposition~\ref{cara} recovers the special case of amalgamated algebras, as recorded in the next corollary.

\begin{corollary}[{\cite[Proposition 4.4]{DFF1}}]
Let $\alpha: A\rightarrow D$ and $\beta: B\rightarrow D$ be two ring homomorphisms.
Then, $\alpha\times_D \beta=A\bowtie^{f}J$, for some ideal $J$ of $B$ if and only if $\alpha=\beta\circ f$.
\qed\end{corollary}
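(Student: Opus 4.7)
The plan is to derive this corollary as a direct specialization of Proposition~\ref{cara}, using the identification in Example~\ref{aa} that $A\bowtie^{f}J=A\bowtie^{\iota,f}(I,J)$ with $\iota=\id_A$ and $I=f^{-1}(J)$. Concretely, I would apply Proposition~\ref{cara} after relabeling the square there so that the top map $A\to B$ is $\iota:A\to A$, the left map $A\to C$ is the corollary's $f$ (viewed as $A\to B$), and the right and bottom maps are the corollary's $\alpha$ and $\beta$, respectively. Under this relabeling the commutativity of the square reads $\alpha\circ\iota=\beta\circ f$, i.e., exactly $\alpha=\beta\circ f$.

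For the forward direction, assuming $\alpha=\beta\circ f$, I would first observe that $(a,f(a))\in\alpha\times_D\beta$ for every $a\in A$; hence the canonical projection $\pi$ onto the first factor satisfies $\pi(\alpha\times_D\beta)=A$, and therefore $\alpha\circ\pi(\alpha\times_D\beta)=\alpha(A)=\alpha\circ\iota(A)$. Both clauses of Proposition~\ref{cara}(2) are thus in force, so its equivalence (2)$\Rightarrow$(1) yields $\alpha\times_D\beta=A\bowtie^{\iota,f}(I,J)$ for suitable ideals, and Example~\ref{aa} rewrites the right-hand side as $A\bowtie^{f}J$, producing the desired ideal of $B$. (Tracing the construction in the proof of Proposition~\ref{cara}, the ideal obtained is $J=\Ker(\beta)$, which is also the natural choice one would extract directly.)

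Conversely, if $\alpha\times_D\beta=A\bowtie^{f}J$ for some ideal $J$ of $B$, then Example~\ref{aa} presents the right-hand side as $A\bowtie^{\iota,f}(f^{-1}(J),J)$, so condition (1) of Proposition~\ref{cara} holds; the implication (1)$\Rightarrow$(2) then delivers the commutativity of the square, which under the chosen relabeling is precisely $\alpha=\beta\circ f$.

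The only real obstacle is the bookkeeping: Proposition~\ref{cara} is stated for pullbacks living in $B\times C$, whereas here the pullback lives in $A\times B$, so the two copies of ``$A$'' (the apex of the corollary's pullback and the source of $f$) must be carefully distinguished when fitting the corollary into the proposition's template. Once this relabeling is in place, together with Example~\ref{aa} to eliminate the trivial first component, both implications follow immediately from Proposition~\ref{cara}.
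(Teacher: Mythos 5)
Your proposal is correct and is exactly the route the paper intends: the corollary is stated with only a \qed, the justification being the preceding sentence that ``in view of Example~\ref{aa}, Proposition~\ref{cara} recovers the special case of amalgamated algebras,'' and your relabeling (template $f=\iota=\id_A$, template $g=f$, so the pullback sits in $A\times B$) together with the observation that $\pi(\alpha\times_D\beta)=A$ is precisely the bookkeeping needed to make that specialization rigorous. Nothing is missing.
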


We close this section with a brief discussion on Traverso's Glueings of prime
ideals \cite{Ped,Tam,Tra} which are special pullbacks \cite[Lemma 2]{Yan}. So, they can also be viewed as special bi-amalgamations if they satisfy Condition (2) of Proposition~\ref{cara}. Precisely, from \cite[Lemma 1]{Yan}, let $A$ be a Noetherian ring and $B$ an overring of $A$ such that $B$ is a finite $A$-module. Let $p\in\Spec(A)$ and let $p_{1}, ..., p_{n}$ be the prime ideals of $B$ lying over $p$. For each $i$, $\frac{A_{p}}{pA_{p}}$ is a subfield of $\frac{B_{p_{i}}}{p_{i}B_{p_{i}}}$, and let $\overline{\frac{b}{t}}^{i}$ denote the class of the element $\frac{b}{t}$ of $B_{p_{i}}$ modulo $p_{i}B_{p_{i}}$. The ring $A'$ obtained from $B$ by glueing over $p$
is the subring of $B$ (containing $A$) given by
$$A':=\left\{b\in B \mid \exists \frac{a_{o}}{s_{o}}\in A_{p}\ \text{with}\ \overline{\frac{b}{1}}^{i}=\overline{\frac{a_{o}}{s_{o}}}^{i}\ \forall i\ \text{and, for}\  \frac{a}{s}\in A_{p},\ \overline{\frac{b}{1}}^{i}=\overline{\frac{a}{s}}^{i}\Leftrightarrow\overline{\frac{b}{1}}^{j}=\overline{\frac{a}{s}}^{j}\ \forall i,j\right\}.$$

Now, consider the following diagram
$$\xymatrix@R=1.5cm  @C=2.5cm{ A \ar[d]^{\mu}\ar[r]^{\iota} & \ar[d]^{\Phi}B\\ \frac{A_{p}}{pA_{p}} \ar[r]^{\Psi}&   D:=\frac{B_{p_{1}}}{p_{1}B_{p_{1}}}\times \dots  \times \frac{B_{p_{n}}}{p_{n}B_{p_{n}}}}$$
where $\iota$ is the natural embedding, $\mu(a)=\overline{\frac{a}{1}}\ \forall a\in A$, $\Phi(b)=(\overline{\frac{b}{1}}^{1}, ..., \overline{\frac{b}{1}}^{n})\ \forall b\in B$, and $\Psi(\overline{\frac{a}{s}})=(\overline{\frac{a}{s}}^{1}, ..., \overline{\frac{a}{s}}^{n})\ \forall \frac{a}{s}\in A_{p}$. Let $J:=\Ker(\Phi)$ and $J':=\Ker(\Psi)$ and note that $$p=\iota^{-1}(J)=\mu^{-1}(J').$$

\begin{corollary}\label{glu}
Under the above notation, the following assertions are equivalent:
\begin{enumerate}
\item $A'= A\bowtie^{\iota,\mu}(J,J')$;
\item For any $(\frac{a}{s},b)\in A_{p}\times B:\ a-sb\in\bigcap_{1\leq i\leq n}p_{i} \Rightarrow a-sa_{o}\in p$, for some $a_{o}\in A$.
\end{enumerate}
\end{corollary}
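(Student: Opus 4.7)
The plan is to invoke Proposition~\ref{cara} on the square displayed just before the corollary, with $f=\iota$, $g=\mu$, $\alpha=\Phi$, $\beta=\Psi$. Since \cite[Lemma 2]{Yan} (recalled in the preceding paragraph) identifies the glueing with the pullback, $A'=\Phi\times_D\Psi$, Proposition~\ref{cara} reduces assertion (1) to verifying two things: commutativity of the square and the equality $\Phi\circ\pi(\Phi\times_D\Psi)=\Phi\circ\iota(A)$. Commutativity is automatic from the definitions, since for $a\in A$ we have $\Phi(\iota(a))=(\overline{a/1}^{\,1},\dots,\overline{a/1}^{\,n})=\Psi(\overline{a/1})=\Psi(\mu(a))$. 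So the whole content reduces to translating the pullback condition $\Phi\circ\pi(\Phi\times_D\Psi)=\Phi(A)$ into the concrete statement (2).

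Next I would unpack the projection. One inclusion, $\Phi(A)\subseteq \Phi\circ\pi(\Phi\times_D\Psi)$, is trivial because $(a,\overline{a/1})\in \Phi\times_D\Psi$ for every $a\in A$. The reverse inclusion says: whenever $b\in B$ and $\tfrac{a}{s}\in A_p$ satisfy $\Phi(b)=\Psi(\overline{a/s})$, there exists $a'\in A$ with $\Phi(b)=\Phi(a')$. Here I would use two clean identifications. First, because $s\notin p$ implies $s\notin p_i$ for each $i$, the equality $\overline{b/1}^{\,i}=\overline{a/s}^{\,i}$ in $B_{p_i}/p_i B_{p_i}$ is equivalent (using that $p_i$ is prime and $s$ is a unit modulo $p_iB_{p_i}$) to $a-sb\in p_i$; so $\Phi(b)=\Psi(\overline{a/s})$ is the same as $a-sb\in \bigcap_i p_i$. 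Second, because the diagonal map $A_p/pA_p\hookrightarrow \prod_i B_{p_i}/p_iB_{p_i}$ is injective (each component is an inclusion of fields, as noted in the paper), $\Psi$ is injective; hence $\Psi(\overline{a/s})=\Phi(a')=\Psi(\overline{a'/1})$ is equivalent to $\overline{a/s}=\overline{a'/1}$ in $A_p/pA_p$, i.e.\ to $a-sa'\in p$.

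Combining the two translations, the pullback condition $\Phi\circ\pi(\Phi\times_D\Psi)=\Phi(A)$ becomes: for every $(\tfrac{a}{s},b)\in A_p\times B$, if $a-sb\in\bigcap_i p_i$ then there exists $a_o\in A$ with $a-sa_o\in p$, which is exactly (2). I expect the only delicate point to be the first translation step, namely the clean passage from the congruences $\overline{b/1}^{\,i}=\overline{a/s}^{\,i}$ in the residue fields $B_{p_i}/p_iB_{p_i}$ to the single global condition $a-sb\in \bigcap_i p_i$ in $B$; this is where primality of each $p_i$ and the fact that $s\notin p$ must be combined carefully. Once this is in hand, the equivalence (1)$\Leftrightarrow$(2) follows directly from Proposition~\ref{cara}.
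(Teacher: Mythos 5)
Your proposal is correct and follows essentially the same route as the paper: identify $A'$ with $\Phi\times_{D}\Psi$ via \cite[Lemma 2]{Yan}, check commutativity and the trivial inclusion $\Phi(A)\subseteq\Phi\circ\pi(\Phi\times_{D}\Psi)$, and then apply Proposition~\ref{cara} to reduce (1) to the reverse inclusion, which is translated into condition (2). The only difference is that you spell out the residue-field translations ($\overline{b/1}^{\,i}=\overline{a/s}^{\,i}\Leftrightarrow a-sb\in p_i$ using $s\notin p_i$, and injectivity of $\Psi$ giving $a-sa_o\in p$) that the paper's proof leaves as an implicit chain of equivalences.
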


\begin{proof}
By \cite[Lemma 2]{Yan}, $A'$ can be identified with the pullback $\Phi\times_{D}\Psi$. Further, notice that $\Phi\circ\iota=\Psi\circ\mu$; i.e., the above diagram is commutative. Let $\pi: B\times \frac{A_{p}}{pA_{p}}\rightarrow B$ be the canonical projection and let $a\in A$. Then
$$\Psi\big(\overline{\frac{a}{1}}\big)=\big(\overline{\frac{a}{1}}^{1}, ..., \overline{\frac{a}{1}}^{n}\big)=\Phi(a)=\Phi\circ\pi\big(a,\overline{\frac{a}{1}}\big).$$
Hence $\Phi(A)\subseteq\Phi\circ\pi(\Phi\times_{D}\Psi)$. Therefore, by Proposition~\ref{cara}, (1) holds if and only if $\Phi\circ\pi(\Phi\times_{D}\Psi)\subseteq \Phi(A)$ if and only if for any $(\frac{a}{s},b)\in A_{p}\times B$,  $\overline{\frac{a}{s}}^{i}=\overline{\frac{b}{1}}^{i}\ \forall i$ forces $\overline{\frac{a}{s}}^{i}=\overline{\frac{a_{o}}{1}}^{i}\ \forall i$, for some $a_{o}\in A$ if and only if (2) holds.
\end{proof}

For example, if $A := \Z$ and $p := 2\Z$, then for any finite $\Z$-module $B$ (e.g., $\Z[i]$)
Condition (2) of Corollary~\ref{glu} always holds since, for any $n\in \Z$ and $s\in\Z \setminus 2\Z$, $n-sn\in 2\Z$.

\section{Basic algebraic properties of bi-amalgamations}

\noindent Throughout, let $f: A\rightarrow B$ and $g: A\rightarrow C$ be two ring homomorphisms and  $J,J'$ two ideals of $B$ and $C$, respectively, such that $I_{o}:=f^{-1}(J)=g^{-1}(J')$. Let
$$A\bowtie^{f,g}(J,J'):=\big\{(f(a)+j,g(a)+j') \mid a\in A, (j,j')\in J\times J'\big\}$$
be the bi-amalgamation of $A$ with $(B, C)$ along $(J, J')$ with respect to $(f,g)$.

This section studies basic algebraic properties of bi-amalgamations. Precisely, we investigate necessary and sufficient conditions for a bi-amalgamation to be a Noetherian ring, a domain, or a reduced ring. We will show that the transfer of these notions is made via the special rings $f(A)+J$ and $g(A)+J'$ (which correspond to $B$ and $C$, respectively, in the case when $f$ and $g$ are surjective).

We start with some basic ideal-theoretic properties of bi-amalgamations. For this purpose, notice first that $0\times J'$, $J\times 0$, and $J\times J'$ are particular ideals of $A\bowtie^{f,g}(J,J')$; and if $I$ is an ideal of $A$, then the set $$I\bowtie^{f,g}(J,J'):=\big\{(f(i)+j,g(i)+j')\mid i\in I, (j,j')\in J\times J'\big\}$$
is an ideal of $A\bowtie^{f,g}(J,J')$ containing $J\times J'$.

\begin{proposition}\label{prop cong}
Let $I$ be an ideal of $A$. We have the following canonical isomorphisms:
\begin{enumerate}
\item $\dfrac{A\bowtie^{f,g}(J,J')}{I\bowtie^{f,g}(J,J')}\cong\dfrac{A}{I+I_{o}}$.

\item $\displaystyle\frac{A\bowtie^{f,g}(J,J')}{0\times J'}\cong f(A)+J$ and $\displaystyle\frac{A\bowtie^{f,g}(J,J')}{J\times 0}\cong g(A)+J'$.

\item $\dfrac{A}{I_{o}}\cong\dfrac{A\bowtie^{f,g}(J,J')}{J\times J'}\cong \dfrac{f(A)+J}{J}\cong \dfrac{g(A)+J'}{J'}$.
\end{enumerate}
\end{proposition}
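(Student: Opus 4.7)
The plan is to prove each isomorphism by exhibiting a surjective ring homomorphism and identifying its kernel via the first isomorphism theorem. Parts (2) and (3) will then follow either as specializations of (1) or by a direct application of standard isomorphism arguments.

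For (1), I would define
$$\phi\colon A\bowtie^{f,g}(J,J')\longrightarrow A/(I+I_{o}),\qquad (f(a)+j,g(a)+j')\longmapsto a+(I+I_{o}).$$
Well-definedness is the first thing to verify: if $(f(a)+j,g(a)+j')=(f(a')+j_1,g(a')+j'_1)$, then $f(a-a')\in J$, hence $a-a'\in I_{o}\subseteq I+I_{o}$. Surjectivity is immediate by sending $a\mapsto(f(a),g(a))$. For the kernel, the containment $I\bowtie^{f,g}(J,J')\subseteq\Ker\phi$ is clear. For the reverse, if $\phi(f(a)+j,g(a)+j')=0$, write $a=i+i_{o}$ with $i\in I$ and $i_{o}\in I_{o}$; then $f(i_{o})\in J$ and $g(i_{o})\in J'$, so one absorbs $f(i_{o})$ into $j$ and $g(i_{o})$ into $j'$ to realize the element as $(f(i)+j_1,g(i)+j'_1)\in I\bowtie^{f,g}(J,J')$.

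For (2), I would use the projection $\pi_1\colon A\bowtie^{f,g}(J,J')\to f(A)+J$ given by $(f(a)+j,g(a)+j')\mapsto f(a)+j$, which is surjective by construction. The main step is identifying $\Ker\pi_1$ with $0\times J'$: an element $(0,k)$ lies in the bi-amalgamation precisely when $k=g(a)+j'$ with $f(a)=-j\in J$, which forces $a\in I_{o}$ and hence $g(a)\in J'$, so $k\in J'$; conversely, every $(0,k)$ with $k\in J'$ is realized by taking $a=0$, $j=0$, $j'=k$. The symmetric projection $\pi_2\colon A\bowtie^{f,g}(J,J')\to g(A)+J'$ handles the other isomorphism.

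For (3), the leftmost isomorphism is part (1) applied with $I=0$, noting that $0\bowtie^{f,g}(J,J')=J\times J'$. The remaining two isomorphisms follow from the first isomorphism theorem applied to the surjections $A\to(f(A)+J)/J$, $a\mapsto\overline{f(a)}$, and $A\to(g(A)+J')/J'$, $a\mapsto\overline{g(a)}$, whose kernels are $f^{-1}(J)=I_{o}=g^{-1}(J')$ by hypothesis. The only step requiring genuine care is the kernel computation in (1): one must combine the decomposition $a=i+i_{o}$ with the absorption of $f(i_{o})\in J$ and $g(i_{o})\in J'$ into the ideal components in order to rewrite the element in the canonical form of $I\bowtie^{f,g}(J,J')$; once this trick is isolated, the rest of the argument is bookkeeping.
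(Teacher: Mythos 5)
Your proof is correct and follows essentially the same route as the paper: all three parts are first-isomorphism-theorem arguments with the natural maps, and your kernel identifications in (2) and (3) match the paper's. The only cosmetic differences are that in (1) you map $A\bowtie^{f,g}(J,J')$ onto $A/(I+I_{o})$ rather than mapping $A$ onto the quotient of the bi-amalgamation (so you verify well-definedness where the paper verifies surjectivity, and you in fact spell out the absorption step the paper leaves as ``one can check''), and in (3) you get $A/I_{o}\cong (f(A)+J)/J$ directly from $f$ rather than via the bi-amalgamation; both variants are sound.
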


\begin{proof}
(1) Consider the mapping
$$\begin{array}{cccc}
\varphi:    & A     & \rightarrow   &   \displaystyle\frac{A\bowtie^{f,g}(J,J')}{I\bowtie^{f,g}(J,J')}\\
            & a     & \longmapsto       &   \overline{(f(a),g(a))}.
\end{array}$$
Clearly, $\varphi$ is a surjective ring homomorphism and one can check that $\Ker(\varphi)=I+I_{o}$.

(2) If $f(a)+j=0$ for some $a\in A$ and $j\in J$, then $g(a)+j'\in J'$ for any $j'\in J'$. So the kernel of the surjective canonical homomorphism $A\bowtie^{f,g}(J,J')\twoheadrightarrow f(A)+J$ coincides with $0\times J'$. Hence, the first isomorphism holds and the second one follows similarly.

(3) The first isomorphism is a particular case of (1) for $I=0$. Further, if $f(a)+j\in J$ for some $a\in A$ and $j\in J$, then $g(a)+j'\in J'$ for any $j'\in J'$. So the kernel of the canonical surjective homomorphism $$A\bowtie^{f,g}(J,J')\twoheadrightarrow\dfrac{f(A)+J}{J}$$ coincides with $J\times J'$.
\end{proof}

The fact that bi-amalgamations can be represented as pullbacks is an  important tool that one can use to investigate the algebraic properties of these constructions. The following results give examples of this use.

\begin{proposition}\label{noeth}
Under the above notation, we have:
\begin{center} $A\bowtie^{f,g}(J,J')$ is Noetherian  $\Leftrightarrow$ $f(A)+J$ and $g(A)+J'$ are Noetherian.\end{center}
\end{proposition}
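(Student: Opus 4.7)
The forward implication is immediate: by Proposition~\ref{prop cong}(2), both $f(A)+J$ and $g(A)+J'$ appear as quotients of $A\bowtie^{f,g}(J,J')$, and quotients of Noetherian rings are Noetherian.

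For the converse, the plan is to realize $R:=A\bowtie^{f,g}(J,J')$ as a subring of $T:=(f(A)+J)\times(g(A)+J')$ over which $T$ is module-finite, and then invoke the Eakin--Nagata theorem. The inclusion $R\hookrightarrow T$ is exactly the map $\iota_2$ from the conductor square of Proposition~\ref{conductor}, and $T$, being a finite product of Noetherian rings, is Noetherian. So the whole argument reduces to producing a finite set of $R$-module generators for $T$.

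I would take as candidates $e_1:=(1_B,0)$ and $e_2:=(0,1_C)$ in $T$ (both well-defined because $f$ and $g$ preserve unity, so $1_B\in f(A)+J$ and $1_C\in g(A)+J'$). Since $R$ acts on $T$ by coordinate-wise multiplication inherited from $B\times C$, the direct computation $(f(a)+j,g(a)+j')\cdot e_1=(f(a)+j,0)$ shows $Re_1=(f(A)+J)\times\{0\}$, and symmetrically $Re_2=\{0\}\times(g(A)+J')$. Adding, $Re_1+Re_2=T$, so $T$ is a two-generator $R$-module and Eakin--Nagata yields that $R$ is Noetherian.

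There is no substantial obstacle here: the conductor-square representation of Proposition~\ref{conductor} already packages $R$ inside the natural Noetherian overring $T$, and the module-finite property falls out from a direct coordinate calculation using that every first (resp.\ second) coordinate of an element of $T$ is automatically realized by the first (resp.\ second) coordinate of some element of $R$. The strategy parallels that of D'Anna--Finocchiaro--Fontana for the amalgamated algebra $A\bowtie^f J$ in \cite{DFF1}, where a similar Eakin--Nagata argument is employed.
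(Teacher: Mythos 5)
Your proof is correct, but it takes a different route from the paper's. For the converse, the paper stays inside $R:=A\bowtie^{f,g}(J,J')$ itself: using Proposition~\ref{prop1} it writes $R$ as the fiber product $\alpha\times_{A/I_{o}}\beta$ and invokes \cite[Proposition 4.10]{DFF1}, which reduces the claim to showing that $\Ker(\beta)=J'$ is a Noetherian $R$-module; this holds because the $R$-submodules of $J'$ (via the surjection $R\twoheadrightarrow g(A)+J'$) are exactly the ideals of the Noetherian ring $g(A)+J'$ contained in $J'$. In effect the paper uses the short exact sequence $0\to 0\times J'\to R\to f(A)+J\to 0$ and the elementary fact that an extension of Noetherian modules is Noetherian. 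You instead go \emph{up} to the overring $T=(f(A)+J)\times(g(A)+J')$ and come back down by Eakin--Nagata. Your key computation is sound: $(f(a)+j,g(a))\in R$ shows every element of $f(A)+J$ occurs as a first coordinate of an element of $R$, so $Re_1=(f(A)+J)\times\{0\}$ and symmetrically for $e_2$, whence $T=Re_1+Re_2$ is a two-generated $R$-module and Eakin--Nagata applies. The trade-off is that your argument is self-contained modulo Eakin--Nagata (no appeal to \cite{DFF1}), while the paper's argument uses only elementary module theory packaged in the cited pullback lemma; Eakin--Nagata is noticeably heavier machinery than is strictly needed here, but nothing in your argument fails.
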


\begin{proof}
In view of Proposition~\ref{prop cong}(2), we only need to prove the reverse implication. By Proposition~\ref{prop1},
$A\bowtie^{f,g}(J,J')=\alpha\times_{\frac{A}{I_{o}}}\beta$ determined by the ring homomorphisms $\alpha: f(A)+J\rightarrow A/I_{o}$, $f(a)+j\mapsto \bar{a}$ and $\beta: g(A)+J'\rightarrow A/I_{o}$, $g(a)+j'\mapsto \bar{a}$. Sine $f(A)+J$ is Noetherian, by  \cite[Proposition 4.10]{DFF1}, it suffices to show that $\Ker(\beta)=J'$ is a Noetherian module over $A\bowtie^{f,g}(J,J')$ with the module structure induced by the surjective canonical homomorphism $A\bowtie^{f,g}(J,J')\twoheadrightarrow g(A)+J'$. But, under this structure, $A\bowtie^{f,g}(J,J')$-submodules of $J'$ correspond to subideals of $J'$ in the Noetherian ring $g(A) + J'$. This leads to the conclusion.
\end{proof}

In view of Example~\ref{aa}, Proposition~\ref{noeth} recovers the special case of amalgamated algebras, as recorded in the next corollary.

\begin{corollary}[{\cite[Proposition 5.6]{DFF1}}]
Under the above notation, we have:
\begin{center} $A\bowtie^{f}J$ is Noetherian  $\Leftrightarrow$  $A$ and $f(A)+J$ are Noetherian.\end{center}
\qed\end{corollary}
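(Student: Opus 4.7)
The plan is to derive the corollary as an immediate specialization of Proposition~\ref{noeth} via the identification exhibited in Example~\ref{aa}, so no fresh work on Noetherianity is required.

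First, I would recall from Example~\ref{aa} that, letting $\iota := \id_A$ and $I := f^{-1}(J)$, we have a canonical identification
$$A \bowtie^{f} J \;=\; A \bowtie^{\iota, f}(I, J),$$
and the preimages $\iota^{-1}(I) = I = f^{-1}(J)$ agree, so the bi-amalgamation on the right is legitimately defined.

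Next, I would apply Proposition~\ref{noeth} to this bi-amalgamation with the roles $(f,g) \leftsquigarrow (\iota, f)$ and $(J,J') \leftsquigarrow (I,J)$. The proposition then yields: $A\bowtie^{\iota, f}(I,J)$ is Noetherian if and only if both $\iota(A)+I$ and $f(A)+J$ are Noetherian. Since $\iota = \id_A$ and $I$ is an ideal of $A$, we have $\iota(A)+I = A+I = A$, so the first condition becomes simply that $A$ is Noetherian. Combining the two gives the stated equivalence.

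There is no real obstacle here; the only care needed is to verify that Proposition~\ref{noeth} does apply, namely that $\iota^{-1}(I)=f^{-1}(J)$, which is automatic from the definition of $I$. Everything else is a direct substitution.
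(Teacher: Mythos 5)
Your proposal is correct and is exactly the argument the paper intends: the corollary is stated as an immediate consequence of Proposition~\ref{noeth} via the identification $A\bowtie^{f}J=A\bowtie^{\iota,f}(I,J)$ from Example~\ref{aa}, with $\iota(A)+I=A$. Nothing is missing.
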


As an illustrative example for Proposition~\ref{noeth} (of an original Noetherian ring which arises as a bi-amalgamation) is provided in Example~\ref{s4:exa1}.

Recall that the prime spectrum of a ring $R$ is said to be Noetherian if $R$ satisfies the ascending chain condition on radical ideals (or, equivalently, every prime ideal of $R$ is the radical of a finitely generated ideal) \cite{OP}. Let $\Spec(R)$ denote the prime spectrum of a ring $R$.

\begin{proposition}
Under the above notation, we have:
\begin{center} $\Spec\big(A\bowtie^{f,g}(J,J')\big)$ is Noetherian  $\Leftrightarrow$ $\Spec\big(f(A)+J\big)$ and $\Spec\big(g(A)+J'\big)$  are Noetherian.\end{center}
\end{proposition}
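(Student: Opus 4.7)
The plan is to use Proposition~\ref{prop cong}(2), which hands us two surjective ring homomorphisms
$$\pi_1: A\bowtie^{f,g}(J,J') \twoheadrightarrow f(A)+J, \qquad \pi_2: A\bowtie^{f,g}(J,J') \twoheadrightarrow g(A)+J',$$
with kernels $0\times J'$ and $J\times 0$, respectively. Each $\pi_i$ induces a homeomorphism of $\Spec$ of its target onto the closed subspace $V(0\times J')$ or $V(J\times 0)$ of $\Spec\bigl(A\bowtie^{f,g}(J,J')\bigr)$. I will also rely on two elementary topological facts that together with the equivalent definitions of Noetherian spectra (ACC on radical ideals, equivalently DCC on closed subsets of $\Spec$) make the argument formal: every subspace of a Noetherian space is Noetherian, and a finite union of Noetherian subspaces is Noetherian.

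For the forward implication, I would simply observe that the closed subspaces $V(0\times J')\cong\Spec(f(A)+J)$ and $V(J\times 0)\cong\Spec(g(A)+J')$ of the ambient Noetherian space $\Spec\bigl(A\bowtie^{f,g}(J,J')\bigr)$ inherit the Noetherian property.

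For the reverse implication the key observation is the ideal computation
$$(0\times J')\cdot(J\times 0)=0\quad\text{in}\ A\bowtie^{f,g}(J,J'),$$
which holds because every generator $(0,j')(j,0)$ of the product is zero. Taking $V(\cdot)$ yields
$$\Spec\bigl(A\bowtie^{f,g}(J,J')\bigr)=V(0)=V\bigl((0\times J')(J\times 0)\bigr)=V(0\times J')\cup V(J\times 0).$$
Since both closed pieces are homeomorphic to Noetherian spectra by hypothesis, their finite union is Noetherian, completing the argument.

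There is no real obstacle; the whole proof rests on recognizing the two natural surjections from Proposition~\ref{prop cong}(2) as closed immersions on $\Spec$, and on the trivial but decisive fact that the two kernels annihilate each other. The only small check is that $0\times J'$ and $J\times 0$ are genuine ideals of $A\bowtie^{f,g}(J,J')$, which follows from $J$ and $J'$ being ideals of $f(A)+J$ and $g(A)+J'$, respectively, and from the explicit form of elements of the bi-amalgamation.
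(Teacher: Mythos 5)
Your proof is correct, but it takes a genuinely different route from the paper's. The paper again invokes the pullback presentation $A\bowtie^{f,g}(J,J')=\alpha\times_{A/I_{o}}\beta$ from Proposition~\ref{prop1} and cites Fontana's result \cite[Corollary 1.6]{F} on Noetherian spectra of fiber products, supplemented by the observation (via Proposition~\ref{prop cong}(3)) that $A/I_{o}$ is a homomorphic image of the bi-amalgamation, so its spectrum is automatically Noetherian in the forward direction. You instead work entirely inside $A\bowtie^{f,g}(J,J')$: the two surjections of Proposition~\ref{prop cong}(2) identify $\Spec(f(A)+J)$ and $\Spec(g(A)+J')$ with the closed sets $V(0\times J')$ and $V(J\times 0)$, and the decisive computation $(0\times J')\cdot(J\times 0)=0$ gives the covering $\Spec\big(A\bowtie^{f,g}(J,J')\big)=V(0\times J')\cup V(J\times 0)$; the equivalence then follows from the two standard topological facts (subspaces and finite unions of Noetherian spaces are Noetherian), together with the Ohm--Pendleton identification of ``Noetherian spectrum'' with the Noetherian topology on $\Spec$. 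Your argument is more elementary and self-contained --- it needs no external fiber-product machinery and makes the geometric content (the spectrum splits as a union of two closed pieces because the two kernels annihilate each other) completely transparent --- whereas the paper's argument fits its systematic program of deducing properties of bi-amalgamations from their pullback presentation and would generalize to situations where no such annihilating pair of ideals is available. All the steps you flag as ``small checks'' (that $0\times J'$ and $J\times 0$ are ideals, that the surjections are closed immersions on $\Spec$) are indeed routine and are already recorded in the paper.
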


\begin{proof}
$A\bowtie^{f,g}(J,J')=\alpha\times_{\frac{A}{I_{o}}}\beta$ via the homomorphisms $\alpha: f(A)+J\rightarrow A/I_{o}$, $f(a)+j\mapsto \bar{a}$ and $\beta: g(A)+J'\rightarrow A/I_{o}$, $g(a)+j'\mapsto \bar{a}$. So, by \cite[Corollary 1.6]{F}, the prime spectra of $A\bowtie^{f,g}(J,J')$ and $A/I_{o}$ are Noetherian if and only if so are the prime spectra of $f(A)+J$ and $g(A)+J'$. But, by Proposition~\ref{prop cong}(3) if the prime spectrum of $A\bowtie^{f,g}(J,J')$ is Noetherian, then so is the spectrum of $A/I_{o}$ since this notion is stable under homomorphic image. This leads to the conclusion.
\end{proof}

The next result characterizes bi-amalgamations without zero divisors.

\begin{proposition}\label{domain}
Under the above notation, the following assertions are equivalent:
\begin{enumerate}
  \item $A\bowtie^{f,g}(J,J')$ is a domain;
  \item ``$J=0$ and $g(A)+J'$ is a domain" or ``$J'=0$ and $f(A)+J$ is a domain."
\end{enumerate}
\end{proposition}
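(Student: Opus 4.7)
The plan is to handle the two directions separately, with the forward implication (1)$\Rightarrow$(2) being the substantive one and the converse reducing to a quotient identification already established.

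For the forward direction, the key observation is that whenever $j\in J$ and $j'\in J'$, the elements $(j,0)$ and $(0,j')$ both lie in $A\bowtie^{f,g}(J,J')$: simply take $a=0$ in the defining presentation, so $(j,0)=(f(0)+j,g(0)+0)$ and $(0,j')=(f(0)+0,g(0)+j')$. Their product in $B\times C$ is $(0,0)$. If we had nonzero $j\in J$ and nonzero $j'\in J'$ simultaneously, this would exhibit a pair of nonzero zero-divisors in $A\bowtie^{f,g}(J,J')$, contradicting the assumption that it is a domain. Thus $J=0$ or $J'=0$.

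Assume without loss of generality that $J=0$. Then the ideal $J\times 0$ of Proposition~\ref{prop cong}(2) is the zero ideal, and the second isomorphism of that proposition reduces to $A\bowtie^{f,g}(0,J')\cong g(A)+J'$. Since the property of being a domain transfers across this isomorphism, $g(A)+J'$ is a domain. The symmetric case $J'=0$ uses the first isomorphism of Proposition~\ref{prop cong}(2) and yields that $f(A)+J$ is a domain.

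For the converse (2)$\Rightarrow$(1), the very same isomorphisms from Proposition~\ref{prop cong}(2) settle the matter: if $J=0$ and $g(A)+J'$ is a domain, then $A\bowtie^{f,g}(0,J')\cong g(A)+J'$ is a domain; the case $J'=0$ is analogous. There is no real obstacle here — the only conceptual step is recognizing that the ``corner'' elements $(j,0)$ and $(0,j')$ supply built-in zero divisors unless one of $J,J'$ vanishes, and once it vanishes the bi-amalgamation collapses (via Proposition~\ref{prop cong}(2)) onto the appropriate one of $f(A)+J$ or $g(A)+J'$.
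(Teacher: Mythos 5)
Your proposal is correct and follows essentially the same route as the paper: the zero-divisor pair $(j,0)\cdot(0,j')=(0,0)$ forces $J=0$ or $J'=0$, after which Proposition~\ref{prop cong}(2) identifies the bi-amalgamation with $g(A)+J'$ or $f(A)+J$. Your write-up is in fact slightly more careful than the paper's, which leaves the converse implicit.
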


\begin{proof}
Assume that $A\bowtie^{f,g}(J,J')$ is a domain. If $J\neq 0$ and $J'\neq 0$, then for nonzero elements $j\in J$ and $j'\in J'$ we have $(0,j')(j,0)=(0,0)$. Therefore, one of $J$ and $J'$ must be null; in such case,  $A\bowtie^{f,g}(J,J')$ collapse (up to an isomorphism) to $f(A)+J$ or $f(A)+J$ by Proposition~\ref{prop cong}(2). This leads to the conclusion.
\end{proof}

In view of Example~\ref{aa}, Proposition~\ref{domain} recovers the special case of amalgamated algebras, as recorded in the next corollary.

\begin{corollary}[{\cite[Proposition 5.2]{DFF1}}]
Under the above notation, assume $J\not=0$. Then:
\begin{center} $A\bowtie^{f}J$ is a domain  $\Leftrightarrow$  $f^{-1}(J)=0$ and $f(A)+J$ is a domain.\end{center}
\qed\end{corollary}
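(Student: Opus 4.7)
The plan is to derive this corollary directly from Proposition~\ref{domain} (the domain criterion for bi-amalgamations) by passing through the identification supplied by Example~\ref{aa}. Recall that Example~\ref{aa} shows
$$A\bowtie^{f}J \;=\; A\bowtie^{\iota,f}(I,J),$$
where $I:=f^{-1}(J)$ and $\iota:=\id_{A}$. In particular, the pair $(\iota,A)$ plays the role of $(g,C)$ and the ideal $I$ plays the role of $J'$ in the bi-amalgamation framework.

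With this identification in place, Proposition~\ref{domain} immediately gives that $A\bowtie^{f}J$ is a domain if and only if one of the following holds:
\begin{enumerate}
\item $J=0$ and $\iota(A)+I = A$ is a domain, or
\item $I=0$ and $f(A)+J$ is a domain.
\end{enumerate}
Since the hypothesis of the corollary is $J\neq 0$, the first disjunct is ruled out, and we are left precisely with $f^{-1}(J)=I=0$ together with $f(A)+J$ a domain. Conversely, under these two conditions, the second disjunct is satisfied and Proposition~\ref{domain} returns that $A\bowtie^{f}J$ is a domain.

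There is no real obstacle; the whole work has already been carried out in Proposition~\ref{domain}, and the only subtlety is to recognize (via Example~\ref{aa}) that the amalgamated algebra $A\bowtie^{f}J$ is a genuine bi-amalgamation with $J'$ corresponding to $I=f^{-1}(J)$, so that the assumption $J\neq 0$ selects exactly one of the two symmetric cases appearing in the bi-amalgamation criterion.
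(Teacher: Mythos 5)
Your proposal is correct and coincides with the paper's intended argument: the corollary is stated there as an immediate consequence of Proposition~\ref{domain} via the identification $A\bowtie^{f}J=A\bowtie^{\iota,f}(I,J)$ of Example~\ref{aa}, exactly as you do. The hypothesis $J\neq 0$ eliminating the disjunct ``$J=0$ and $A$ is a domain'' and leaving ``$f^{-1}(J)=0$ and $f(A)+J$ is a domain'' is precisely the reduction the paper has in mind.
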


The next result characterizes bi-amalgamations without nilpotent elements.

\begin{proposition}\label{reduced}
Under the above notation, consider the following conditions:
\begin{enumerate}
\item[(a)] $f(A)+J$ is reduced and $J'\cap \nil(C)=0$,
\item[(b)] $g(A)+J'$ is reduced and $J\cap \nil(B)=0$,
\item[(c)] $A\bowtie^{f,g}(J,J')$ is reduced,
\item[(d)] $J\cap \nil(B)=0$ and $J'\cap \nil(C)=0$.
\end{enumerate}
Then:
\begin{enumerate}
\item $(a)\ \text{or}\ (b) \Rightarrow (c) \Rightarrow (d)$.
\item If $I_{o}$ is radical, then the four conditions are equivalent.
\item  If $f$ is surjective and $\Ker(f)\subseteq \Ker(g)$, then:
\begin{center} $A\bowtie^{f,g}(J,J')$ is reduced  $\Leftrightarrow$ $B$ is reduced and $J'\cap \nil(C)=0$.\end{center}
\end{enumerate}
\end{proposition}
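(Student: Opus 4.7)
The plan is to prove each implication by working with nilpotent elements coordinate-by-coordinate and exploiting the fact that an element $(f(a)+j,g(a)+j') \in A\bowtie^{f,g}(J,J')$ is nilpotent if and only if both $f(a)+j$ and $g(a)+j'$ are nilpotent in $B$ and $C$ respectively.

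For part (1), I would first show $(a)\Rightarrow(c)$ (the implication $(b)\Rightarrow(c)$ being symmetric). Suppose $(f(a)+j,g(a)+j')^n=(0,0)$. The first coordinate gives $(f(a)+j)^n=0$ in $f(A)+J$, so by hypothesis $f(a)+j=0$. This forces $f(a)=-j\in J$, hence $a\in f^{-1}(J)=I_o=g^{-1}(J')$, so $g(a)\in J'$ and therefore $g(a)+j'\in J'$. But $g(a)+j'$ is also nilpotent in $C$, so $g(a)+j'\in J'\cap\nil(C)=0$. For $(c)\Rightarrow(d)$, note that for any $j\in J\cap \nil(B)$ the element $(j,0)$ lies in $A\bowtie^{f,g}(J,J')$ (take $a=0$, $j'=0$) and is nilpotent, hence zero; symmetrically for $J'\cap\nil(C)$.

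For part (2), since $(a)\Rightarrow(c)\Rightarrow(d)$ is already established, it suffices to derive $(a)$ from $(d)$ when $I_o$ is radical (and then $(b)$ follows by symmetry). Let $f(a)+j$ be nilpotent in $f(A)+J$. Expanding $(f(a)+j)^n=0$ in $B$, the terms other than $f(a)^n$ all lie in $J$, hence $f(a)^n\in J$, i.e.\ $a^n\in f^{-1}(J)=I_o$. Radicality of $I_o$ gives $a\in I_o$, so $f(a)\in J$, and therefore $f(a)+j\in J\cap\nil(B)=0$ by $(d)$. Thus $f(A)+J$ is reduced, establishing $(a)$.

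For part (3), the implication $(\Leftarrow)$ is immediate: $f(A)+J$ is a subring of the reduced ring $B$, so it is reduced, and $(a)$ holds; apply part~(1). For $(\Rightarrow)$, let $b\in\nil(B)$; surjectivity of $f$ yields $a\in A$ with $b=f(a)$, and from $f(a)^n=0$ we get $a^n\in\Ker(f)\subseteq\Ker(g)$, so $g(a)^n=0$. Then $(f(a),g(a))\in A\bowtie^{f,g}(J,J')$ is nilpotent, hence zero by hypothesis, so $b=f(a)=0$; this shows $B$ is reduced, and $J'\cap\nil(C)=0$ comes from part~(1). The step most likely to require care is $(d)\Rightarrow(a)$ in part~(2), where the radicality of $I_o$ must be invoked precisely to pull the nilpotency of $f(a)^n$ modulo $J$ back to membership of $f(a)$ in $J$; the remaining steps are straightforward coordinate-wise manipulations.
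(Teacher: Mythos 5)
Your proof is correct and follows essentially the same route as the paper's: coordinate-wise analysis of nilpotents for (1), the binomial expansion of $(f(a)+j)^n$ together with radicality of $I_o$ for $(d)\Rightarrow(a)$ in (2), and the observation that $\Ker(f)\subseteq\Ker(g)$ lets you transport nilpotency of $f(a)$ to the pair $(f(a),g(a))$ for (3). No gaps; your write-up is if anything slightly more explicit than the paper's (e.g., in justifying $f(a)^n\in J$ and in spelling out both directions of (3)).
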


\begin{proof}
(1) Let $(f(a)+j,g(a)+j')\in \nil\big(A\bowtie^{f,g}(J,J')\big)$. Then $f(a)+j\in \nil(f(A)+J)=0$. Hence, $a\in I_{o}$. Thus, $g(a)+j'\in J'\cap \nil(C)=0$. Consequently, $\nil\big(A\bowtie^{f,g}(J,J')\big)=0$. This proves $(a)\Rightarrow (c)$. Likewise for $(b)\Rightarrow (c)$.

Let $j\in \nil(B)\cap J$. Therefore, there is a positive integer $n$ such that $0=(j^n,0)=(j,0)^n$ in $A\bowtie^{f,g}(J,J')$. It follows that $j=0$ and hence $\nil(B)\cap J=0$. Similarly, $\nil(C)\cap J'=0$. This proves $(c)\Rightarrow (d)$.

(2) Next, assume that $I_{o}$ is radical, $J\cap \nil(B)=0$, and $J'\cap \nil(C)=0$. Let $f(a)+j\in \nil(f(A)+J)$. Then, there is a positive integer $n$ such that $(f(a)+j)^n=0$. Hence, $f(a)^n\in J$ and thus $a^n\in I_{o}$; that is, $a\in I_{o}$. So, $f(a)+j\in J\cap \nil(B)=0$, as desired. This proves $(d)\Rightarrow (a)$. Likewise for $(d)\Rightarrow (b)$.

(3) In view of (1), it suffices to observe that  $f(a^{n})=0$, for some positive integer, forces $(f(a),g(a))^{n}=0$, yielding $f(a)=0$.
\end{proof}

\begin{remark}\label{Bnil}
If $f(A)+J$ and $g(A)+J'$ are both reduced, then $A\bowtie^{f,g}(J,J')$ is reduced by Proposition~\ref{reduced}. The converse is not true in general. A counter-example (for the special case of amalgamated algebras) is given in \cite[Remark 5.5 (3)]{DFF1}.
\end{remark}

In view of Example~\ref{aa}, Proposition~\ref{reduced} recovers the special case of amalgamated algebras, as recorded in the next corollary.

\begin{corollary}[{\cite[Proposition 5.4]{DFF1}}]
Under the above notation, we have:
\begin{center} $A\bowtie^{f}J$ is reduced  $\Leftrightarrow$  $A$ is reduced and $J\cap \nil(B)=0$.\end{center}
\qed\end{corollary}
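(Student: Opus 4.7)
The plan is to deduce this corollary as an immediate specialization of Proposition~\ref{reduced}(3) through the identification provided by Example~\ref{aa}. Setting $\iota := \id_A$ and $I := f^{-1}(J)$, Example~\ref{aa} gives
$$A\bowtie^{f}J \;=\; A\bowtie^{\iota,f}(I,J),$$
so the statement reduces to a question about the bi-amalgamation with data $(f,g)$ replaced by $(\iota,f)$ and $(J,J')$ replaced by $(I,J)$.

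Next, I would verify the hypotheses of Proposition~\ref{reduced}(3) in this translation: the first coordinate map $\iota$ is surjective (being the identity on $A$), and $\Ker(\iota)=0\subseteq \Ker(f)$ holds trivially. Applying part (3) of that proposition then yields that $A\bowtie^{\iota,f}(I,J)$ is reduced if and only if the codomain of $\iota$---namely $A$ itself---is reduced and $J\cap \nil(B)=0$, which is precisely the claimed equivalence.

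No real obstacle is expected: once the pullback/amalgamation framework has been established and Proposition~\ref{reduced} proved, this corollary is obtained purely by matching notation. The only point requiring care is bookkeeping during the translation, so as not to conflate the two roles played by $f$ (once as the homomorphism in the corollary's statement and once as the ``$g$'' of the bi-amalgamation) and the two distinct ideals called $J$ appearing on the two sides of the identification.
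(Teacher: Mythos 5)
Your proposal is correct and matches the paper's intended argument exactly: the paper proves this corollary by the same specialization, invoking Example~\ref{aa} to write $A\bowtie^{f}J=A\bowtie^{\iota,f}(I,J)$ and then reading off Proposition~\ref{reduced}(3) with $\iota=\id_A$ surjective and $\Ker(\iota)=0\subseteq\Ker(f)$. Your bookkeeping of which map plays the role of ``$f$'' versus ``$g$'' and which ideal plays the role of ``$J$'' versus ``$J'$'' is accurate, so nothing is missing.
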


As an illustrative example for Propositions \ref{noeth} \& \ref{domain} \& \ref{reduced}, we provide an original reduced Noetherian ring with zero divisors which arises as a bi-amalgamation.

\begin{example}\label{s4:exa1}
Consider the surjective ring homomorphism $f: \Z[X]\twoheadrightarrow \Z[\sqrt{2}]$, $p(X)\mapsto p(\sqrt{2})$ and the principal ideal $J:=(\sqrt{2})$ of $\Z[\sqrt{2}]$. Let $p\in \Z[X]$ and write it as $p=(X^2-2)q(X)+aX+b$ for some $a,b\in\Z$ and $q\in \Z[X]$. Then, one can verify that $p(\sqrt{2})\in J$ if and only if $b\in 2\Z$. That is,
$$I_{o}:=f^{-1}(J)=\big\{p\in \Z[X] \mid p(0)\in 2\Z\big\}.$$
Now, consider the ring homomorphism $\alpha: \Z[\sqrt{2}]\twoheadrightarrow \dfrac{\Z[X]}{I_{o}}$, $a+b\sqrt{2}\mapsto \bar{a}$. It follows, by Proposition~\ref{prop1} and Propositions \ref{noeth} \& \ref{domain} \& \ref{reduced}, that
$$\Z[X]\bowtie^{f,f}(J,J)=\alpha\times_{\frac{\Z[X]}{I_{o}}}\alpha=\big\{(a+b\sqrt{2}, c+d\sqrt{2})\mid a,b,c,d\in \Z,\ a-c\in 2\Z\big\}$$
is a reduced Noetherian ring that is not a domain (since $\Z[\sqrt{2}]$ is a Noetherian domain and $J\not=0$).
\end{example}

\section{The prime ideal structure of bi-amalgamations}

\noindent Throughout, let $f: A\rightarrow B$ and $g: A\rightarrow C$ be two ring homomorphisms and  $J,J'$ two ideals of $B$ and $C$, respectively, such that $I_{o}:=f^{-1}(J)=g^{-1}(J')$. Let
$$A\bowtie^{f,g}(J,J'):=\big\{(f(a)+j,g(a)+j') \mid a\in A, (j,j')\in J\times J'\big\}$$
be the bi-amalgamation of $A$ with $(B, C)$ along $(J, J')$ with respect to $(f,g)$.

This section investigates the prime ideal structure of bi-amalgamations and their localizations at prime ideals. We also establish necessary and sufficient conditions for a bi-amalgamation to be local.

Next, we describe the prime (and maximal) ideals of bi-amalgamations. To this purpose, let's adopt the following notation:
$$\begin{array}{lcl}
Y           &:=     &\Spec(f(A)+J)\\
Y'          &:=     &\Spec(g(A)+J')
\end{array}$$
and, for $L\in Y$ and $L'\in Y'$, consider the prime ideals of $A\bowtie^{f,g}(J,J')$ given by:
$$\begin{array}{rcl}
\bar{L}     &:=     &\big(L\times (g(A)+J')\big)\cap\big(A\bowtie^{f,g}(J,J')\big)\\
            &=      &\big\{(f(a)+j,g(a)+j')\mid a\in A, (j,j')\in J\times J', f(a)+j\in L\big\},\\
\bar{L'}    &:=     &\big((f(A)+J)\times L'\big)\cap\big(A\bowtie^{f,g}(J,J')\big)\\
            &=      &\big\{(f(a)+j,g(a)+j')\mid a\in A, (j,j')\in J\times J', g(a)+j'\in L'\big\}.
\end{array}$$

The next two lemmas are needed for the proof of Proposition~\ref{spec}. Recall that if $I$ is an ideal of $A$, then $$I\bowtie^{f,g}(J,J'):=\big\{(f(i)+j,g(i)+j')\mid i\in I, (j,j')\in J\times J'\big\}$$
is an ideal of $A\bowtie^{f,g}(J,J')$. As an immediate consequence of Proposition~\ref{prop cong}(1), we have the following lemma.

\begin{lemma}\label{s5:lem1}
Let $I$ be an ideal of $A$. Then, $I\bowtie^{f,g}(J,J')$ is a prime (resp., maximal) ideal of $A\bowtie^{f,g}(J,J')$ if and only if $I+I_{o}$ is a prime (resp., maximal) ideal of $A$.
\qed\end{lemma}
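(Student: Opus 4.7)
The proof is stated as an immediate consequence of Proposition~\ref{prop cong}(1), so the plan is essentially a one-step verification rather than a substantial argument. My plan is to quote the canonical ring isomorphism
$$\frac{A\bowtie^{f,g}(J,J')}{I\bowtie^{f,g}(J,J')}\;\cong\;\frac{A}{I+I_{o}}$$
provided by Proposition~\ref{prop cong}(1), and then invoke the standard fact that an ideal $K$ of a commutative ring $R$ is prime (respectively, maximal) if and only if the quotient $R/K$ is an integral domain (respectively, a field).

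The key observation is that the properties ``being a domain'' and ``being a field'' are preserved under ring isomorphism. Therefore $A\bowtie^{f,g}(J,J')/I\bowtie^{f,g}(J,J')$ is a domain (resp.\ field) if and only if $A/(I+I_{o})$ is a domain (resp.\ field), which translates back to the prime (resp.\ maximal) condition on each side. This yields both directions of the ``if and only if'' simultaneously, for both the prime case and the maximal case.

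There is no real obstacle here; the entire content of the lemma has already been absorbed into the preceding Proposition~\ref{prop cong}(1), whose proof supplied the explicit surjective ring homomorphism $\varphi\colon A\twoheadrightarrow A\bowtie^{f,g}(J,J')/I\bowtie^{f,g}(J,J')$ with kernel $I+I_{o}$. So the proof will occupy only a couple of lines, and the main care to take is simply to phrase the quotient characterization of primality/maximality in a way that makes the ``respectively'' clauses transparent.
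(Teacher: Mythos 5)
Your proposal is correct and matches the paper exactly: the paper states the lemma as an immediate consequence of Proposition~\ref{prop cong}(1), i.e., it relies on precisely the isomorphism $\frac{A\bowtie^{f,g}(J,J')}{I\bowtie^{f,g}(J,J')}\cong\frac{A}{I+I_{o}}$ together with the quotient characterization of prime and maximal ideals. Nothing is missing.
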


An element of $Y$ (resp., $Y'$) containing $J$ (resp., $J'$) has a special form, as shown by the next lemma.

\begin{lemma}\label{s5:lem2}
Let $L\in Y$ (resp., $Y'$) containing $J$ (resp., $J'$). Then:
$$\bar{L}=f^{-1}(L)\bowtie^{f,g}(J,J')\ \big(\text{resp.,}\ =g^{-1}(L)\bowtie^{f,g}(J,J')\big).$$
\end{lemma}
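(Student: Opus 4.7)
The plan is to exploit the hypothesis $J\subseteq L$ in order to replace the membership condition $f(a)+j\in L$ (appearing in the definition of $\bar L$) by the cleaner condition $f(a)\in L$, i.e., $a\in f^{-1}(L)$. Once this reduction is made, the two sets agree tautologically.

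First I would record that since $L$ is an ideal of $f(A)+J$ and $f\colon A\to f(A)+J$ is a ring homomorphism, the contraction $f^{-1}(L)$ is a well-defined ideal of $A$; in particular $f^{-1}(L)\bowtie^{f,g}(J,J')$ makes sense as an ideal of $A\bowtie^{f,g}(J,J')$ in the sense introduced just before Lemma~\ref{s5:lem1}.

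For the inclusion $\bar L\subseteq f^{-1}(L)\bowtie^{f,g}(J,J')$, I would pick an arbitrary element $(f(a)+j,g(a)+j')\in\bar L$, so that $f(a)+j\in L$. Since $j\in J\subseteq L$, subtracting gives $f(a)\in L$, i.e., $a\in f^{-1}(L)$; the element then visibly has the required form $(f(i)+j,g(i)+j')$ with $i:=a\in f^{-1}(L)$ and $(j,j')\in J\times J'$. For the reverse inclusion, given $(f(i)+j,g(i)+j')$ with $i\in f^{-1}(L)$, one has $f(i)\in L$ and $j\in J\subseteq L$, hence $f(i)+j\in L$, which places the element in $\bar L$. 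The case $L\in Y'$ with $J'\subseteq L$ is handled symmetrically by swapping the roles of $f,J$ and $g,J'$.

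I do not foresee a real obstacle: the whole argument is a direct definition-chase whose single active ingredient is the containment $J\subseteq L$ (resp.\ $J'\subseteq L$). The only point that warrants a line of commentary is the well-definedness of $f^{-1}(L)$ as an ideal of $A$, which is what allows the right-hand side to be interpreted in the notation of Lemma~\ref{s5:lem1} and thus makes the statement nontrivially useful in the sequel (for instance, to apply Lemma~\ref{s5:lem1} and conclude primality/maximality of $\bar L$ from that of $f^{-1}(L)+I_o=f^{-1}(L)$).
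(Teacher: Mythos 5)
Your argument is correct and is essentially the paper's own proof: the single active step in both is that, since $J\subseteq L$, one has $f(a)+j\in L$ if and only if $a\in f^{-1}(L)$, after which the two sets coincide by inspection. The paper merely adds the (ancillary) observation that $f^{-1}(L)$ is a prime ideal of $A$ containing $I_{o}$, which you also note in passing; nothing is missing.
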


\begin{proof}
Let $L\in Y$ containing $J$. Notice first that $f^{-1}(L)$ is a prime ideal of $A$ containing $I_{o}:=f^{-1}(J)$ so that $f^{-1}(L)\bowtie^{f,g}(J,J')$ is a prime ideal of $A\bowtie^{f,g}(J,J')$ by Lemma~\ref{s5:lem1}. Moreover,  for any $a\in A$ and $j\in J$, one can easily see that $f(a)+j\in L$ if and only if $a\in f^{-1}(L)$. Thus, $\overline{L}=f^{-1}(L)\bowtie^{f,g}(J,J')$. Likewise for $L\in Y'$.
\end{proof}

\begin{proposition}\label{spec}
Under the above notation, let $P$ be a prime ideal of $A\bowtie^{f,g}(J,J')$. Then
\begin{enumerate}
\item $J\times J'\subseteq P$ $\Leftrightarrow$ $\exists!\ p\supseteq I_{o}$ in $\Spec(A)$ such that $P=p\bowtie^{f,g}(J,J').$\\
In this case, $\exists\ L\supseteq J$ in $Y$ and $\exists\ L'\supseteq J'$ in $Y'$ such that $P=\bar{L}=\bar{L'}.$
\vspace{.2cm}

\item $J\times J'\nsubseteq P$ $\Leftrightarrow$  $\exists!\ L\in Y$ (or $Y'$) such that  $J\nsubseteq L$ (or $J'\nsubseteq L$) and $P=\bar{L}$.\\
In this case, $(A\bowtie^{f,g}(J,J'))_{P}\cong (f(A)+J)_L\ \big(\text{ or }\ (A\bowtie^{f,g}(J,J'))_{P}\cong (g(A)+J')_L\big).$
\vspace{.2cm}

\noindent Consequently, we have
$$\Spec\big(A\bowtie^{f,g}(J,J')\big)=\big\{\bar{L}\mid L\in \Spec\big(f(A)+J\big)\cup \Spec\big(g(A)+J'\big)\big\}.$$
\end{enumerate}

\end{proposition}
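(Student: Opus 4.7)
The plan is to reduce everything to the three canonical surjections
$$R := A\bowtie^{f,g}(J,J')\twoheadrightarrow f(A)+J,\quad R \twoheadrightarrow g(A)+J',\quad R \twoheadrightarrow A/I_o$$
with respective kernels $0\times J'$, $J\times 0$, and $J\times J'$, furnished by Proposition~\ref{prop cong}, together with the elementary identity $(J\times 0)(0\times J')=0$. This identity forces every prime $P$ of $R$ with $J\times J' \nsubseteq P$ to contain exactly one of $J\times 0$ and $0\times J'$, and this dichotomy drives the case analysis.

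For part (1), assume $J\times J' \subseteq P$. The isomorphism $R/(J\times J') \cong A/I_o$ from Proposition~\ref{prop cong}(3) produces a unique prime $p$ of $A$ containing $I_o$ corresponding to $P$; specializing Proposition~\ref{prop cong}(1) to $I=p$ identifies $P$ with $p\bowtie^{f,g}(J,J')$, giving both existence and uniqueness. For the second half, set $L:=f(p)+J$ and $L':=g(p)+J'$; the isomorphisms of Proposition~\ref{prop cong}(3) imply $L\in Y$ and $L'\in Y'$ are proper and contain $J$ and $J'$ respectively. Using $p\supseteq I_o$ one checks $f^{-1}(L)=p=g^{-1}(L')$, whence Lemma~\ref{s5:lem2} gives $\bar L = p\bowtie^{f,g}(J,J') = \bar{L'} = P$.

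For part (2), assume $J\times J'\nsubseteq P$; by the opening dichotomy one may suppose $J\times 0\subseteq P$ and $0\times J'\nsubseteq P$, the reverse case being symmetric. The isomorphism $R/(J\times 0)\cong g(A)+J'$ from Proposition~\ref{prop cong}(2) yields a unique prime $L'$ of $g(A)+J'$ with $P = \pi_2^{-1}(L')$, which unwinds to $P = \bar{L'}$, and the hypothesis $0\times J'\nsubseteq P$ translates to $J'\nsubseteq L'$. To obtain the localization, I would pick any $(0,j')\in R\setminus P$ with $j'\in J'\setminus L'$ and observe that $(0,j')(j,0)=0$ for every $j\in J$; this kills the extended ideal $(J\times 0)_P$ inside $R_P$, so $\pi_2$ descends to an isomorphism $R_P\cong (g(A)+J')_{L'}$. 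For uniqueness across $Y$ and $Y'$: if $P$ were also of the form $\bar L$ with $L\in Y$ and $J\nsubseteq L$, then choosing $j\in J\setminus L$ gives $(j,0)\in J\times 0\subseteq P$ but $(j,0)\notin \bar L$, a contradiction. The concluding description of $\Spec(R)$ then assembles from (1) and (2), noting conversely that $\bar L = \pi_1^{-1}(L)$ (resp.\ $\bar{L'} = \pi_2^{-1}(L')$) is always prime when $L$ (resp.\ $L'$) is, since the projections $\pi_1$ and $\pi_2$ are surjective (for any $f(a)+j$, the element $(f(a)+j,g(a))$ lies in $R$ and projects to it).

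The step I expect to require most care is the localization isomorphism $R_P \cong (g(A)+J')_{L'}$: verifying rigorously that the extended ideal $(J\times 0)_P$ vanishes in $R_P$ is the crux, and this is the unique place where the hypothesis $0\times J'\nsubseteq P$ is used essentially (via the existence of an annihilator of $J\times 0$ lying outside $P$) rather than merely to classify $P$.
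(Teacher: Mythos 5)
Your proposal is correct, and part (1) follows essentially the paper's route: the paper defines $p$ explicitly as $\{a\in A\mid (f(a)+j,g(a)+j')\in P\ \text{for some}\ (j,j')\in J\times J'\}$ and verifies $P=p\bowtie^{f,g}(J,J')$ by hand, whereas you pull $P$ back through $R/(J\times J')\cong A/I_{o}$; the choice $L:=f(p)+J$, $L':=g(p)+J'$ and the verification $f^{-1}(L)=p$ are identical in both. Part (2) is where you genuinely diverge. The paper deduces it from the conductor square of Proposition~\ref{conductor} together with \cite[Lemma 1.1.4(3)]{FHP}, which produces a unique prime $Q$ of $(f(A)+J)\times(g(A)+J')$ contracting to $P$ with equal localizations, and then uses the fact that a prime of a product ring has the form $L\times(g(A)+J')$ or $(f(A)+J)\times L'$. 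You instead reprove the relevant special case of that lemma from scratch: the identity $(J\times 0)(0\times J')=0$ forces a prime $P$ with $J\times J'\nsubseteq P$ to contain exactly one of $J\times 0$ and $0\times J'$; the quotient isomorphisms of Proposition~\ref{prop cong}(2) then determine $L$ or $L'$ uniquely; and the annihilator $(0,j')$ with $j'\in J'\setminus L'$ kills $(J\times 0)R_{P}$, giving $R_{P}\cong(g(A)+J')_{L'}$. Your version is self-contained and elementary at the cost of a slightly longer verification, while the paper's is shorter but leans on external machinery for conductor squares; your closing observation that $\bar{L}=\pi_1^{-1}(L)$ is prime for \emph{every} $L\in Y$ (needed for the displayed description of $\Spec$) makes explicit a point the paper leaves implicit.
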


\begin{proof}
(1) We only need to prove ($\Rightarrow$). Assume $J\times J'\subseteq P$ and consider the ideal $p$ of $A$ given by
$$p:=\big\{a\in A\mid \exists\ (j,j')\in J\times J'\ \text{such that}\ (f(a)+j,g(a)+j')\in  P\big\}.$$
Clearly, the fact $J\times J'\subseteq P$ forces $I_{o}\subseteq p$. Moreover, we have $P\subseteq p\bowtie^{f,g}(J,J')$. For the reverse inclusion, let $a\in p$. So there exists $(j_{1},j'_{1})\in J\times J'$ such that $(f(a)+j_{1},g(a)+j'_{1})\in  P$. Hence, for every $(j,j')\in J\times J'$, we obtain $$(f(a)+j,g(a)+j')=(f(a)+j_{1},g(a)+j'_{1})+(j-j_{1},j'-j'_{1})\in P$$
since $J\times J'\subseteq P$. It follows that $$P= p\bowtie^{f,g}(J,J').$$
By Lemma~\ref{s5:lem1}, $p$ is a prime ideal of $A$. By Proposition~\ref{prop cong}(1), $p$ must be unique since it contains $I_{o}$.

Next,  let $L:=f(p)+J$. One can verify that $L$ is a prime ideal of $f(A)+J$ with $p\subseteq f^{-1}(L)$. Now, let $a\in f^{-1}(L)$. Then $f(a)=f(x)+j$ for some $x\in p$ and $j\in J$. Hence $(a-x)\in I_{o}\subseteq p$, whence $a\in p$. So, $$f^{-1}(L)= p.$$ It follows, via Lemma~\ref{s5:lem2}, that $$\bar{L}=f^{-1}(L)\bowtie^{f,g}(J,J')=p\bowtie^{f,g}(J,J')=P.$$
Note that for $L':=g(p)+J'$, the same arguments lead to
$$P=\bar{L}=\bar{L'}.$$

(2)  We only need to prove ($\Rightarrow$). Assume $J\times J'\nsubseteq P$. By Proposition \ref{conductor} and \cite[Lemma 1.1.4(3)]{FHP}, there is a unique prime  $Q$ of $(f(A)+J)\times (g(A)+J')$ such that
$$P=Q\cap A\bowtie^{f,g}(J,J')\ \text{ with}\ \big((f(A)+J)\times (g(A)+J')\big)_{Q}=\big(A\bowtie^{f,g}(J,J')\big)_{P}.$$
Then either $Q=L\times (g(A)+J')$ for some prime ideal $L\in Y$ or $Q=(f(A)+J)\times L'$ for some prime ideal $L'\in Y'$. That is,
$$P=\bar{L}\ \text{ or }\ P=\bar{L'}.$$
Accordingly, we'll have
$$(A\bowtie^{f,g}(J,J'))_{P}\cong (f(A)+J)_L\ \text{ or }\ (A\bowtie^{f,g}(J,J'))_{P}\cong (g(A)+J')_{L'}$$
completing the proof of the proposition.
\end{proof}

Next, as an application of Proposition~\ref{spec}, we establish necessary and sufficient conditions for a bi-amalgamation to be local. Notice at this point that, in the presence of the equality $f^{-1}(J)=g^{-1}(J')$, $J\not=B$ if and only if $J'\not=C$.

\begin{proposition}\label{local}
Under the above notation, we have
\begin{enumerate}
\item $A\bowtie^{f,g}(J,J')$ is local $\Leftrightarrow$  $J\not= B$ and $f(A)+J$ \& $g(A)+J'$ are local.\\
Moreover, the maximal ideal of $A\bowtie^{f,g}(J,J')$ has the form $\m\bowtie^{f,g}(J,J')$, where $\m$ is the unique maximal ideal of $A$ containing $I_{o}$.
\bigskip

\item Suppose that $A$ is local. Then: $$A\bowtie^{f,g}(J,J')\ \text{is local}\ \Leftrightarrow J\times J'\subseteq Jac(B\times C).$$
\end{enumerate}
\end{proposition}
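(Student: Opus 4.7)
The plan is to apply Proposition~\ref{spec} in order to enumerate the maximal ideals of $A\bowtie^{f,g}(J,J')$. Combining that proposition with Lemma~\ref{s5:lem1} and the isomorphism $A/I_o\cong (f(A)+J)/J\cong (g(A)+J')/J'$ of Proposition~\ref{prop cong}(3), one sees that $\Max(A\bowtie^{f,g}(J,J'))$ is the disjoint union of three families: type~(I), the ideals $\mathfrak m\bowtie^{f,g}(J,J')$ with $\mathfrak m\in\Max(A)$ containing $I_o$---equivalently the $\bar L$ with $L$ a maximal of $f(A)+J$ containing $J$, or the $\bar{L'}$ with $L'$ a maximal of $g(A)+J'$ containing $J'$; type~(II), the $\bar L$ with $L$ a maximal of $f(A)+J$ \emph{not} containing $J$; and type~(III), the $\bar{L'}$ with $L'$ a maximal of $g(A)+J'$ not containing $J'$.

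For part~(1), I first dispose of the case $J=B$: then $I_o=A$ and a direct check from the definition gives $A\bowtie^{f,g}(J,J')=B\times C$, which is not local. So locality forces $J\neq B$; in that case $J\subsetneq f(A)+J$, hence family~(I) is non-empty (as $A/I_o\neq 0$). The bi-amalgamation is then local precisely when family~(I) is a singleton and families~(II), (III) are empty. But, assuming $J\neq B$ (hence $J'\neq C$), ``(I) singleton + (II) empty'' is equivalent to ``$f(A)+J$ is local'' (since its unique maximal then automatically contains the proper ideal $J$), and ``(I) singleton + (III) empty'' is equivalent to ``$g(A)+J'$ is local''. This proves the biconditional; the explicit form $\mathfrak m\bowtie^{f,g}(J,J')$ of the unique maximal, with $\mathfrak m\in\Max(A)$ the unique maximal containing $I_o$, comes directly from the type-(I) parametrization.

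For part~(2), when $A$ is local with maximal $\mathfrak m$, the uniqueness of a maximal of $A$ containing $I_o$ is automatic (whenever $I_o\subsetneq A$), so by~(1) it suffices to show, under $A$ local, that $f(A)+J$ is local iff $J\subseteq \J(B)$ (and symmetrically on the $g$-side). For $(\Rightarrow)$, let $M$ be the unique maximal of $f(A)+J$ (which then contains $J$) and suppose some $\mathfrak n\in\Max(B)$ misses $J$; choose $j\in J\setminus \mathfrak n$ and $b\in B$ with $1-jb\in\mathfrak n$. Then $jb\in J\subseteq M$ and $1-jb\in\mathfrak n\cap(f(A)+J)\subseteq M$ (the latter being a proper ideal of the local ring $f(A)+J$), forcing $1\in M$, a contradiction. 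For $(\Leftarrow)$, one exploits the localization identity $(f(A)+J)_j=B_j$ for every $j\in J$ (any $b\in B$ equals $bj/j$ with $bj\in J\subseteq f(A)+J$). This yields a maximality-preserving bijection between primes of $f(A)+J$ and of $B$ that avoid $J$; under $J\subseteq \J(B)$ there is no max of $B$ avoiding $J$, so family~(II) is empty, and combined with the unique type-(I) maximal guaranteed by $A$ local, $f(A)+J$ is local.

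The principal technical obstacle I anticipate is the $(\Leftarrow)$ direction of the auxiliary claim in part~(2): verifying that the contraction map $\Max(B)\setminus V(J)\to\Max(f(A)+J)\setminus V(J)$ is a bijection preserving maximality. The clean route is via the identity $(f(A)+J)_j=B_j$ for $j\in J$, which simultaneously delivers the bijection and ensures equidimensional localizations at corresponding primes, hence preservation of maximality. Everything else then reduces to direct bookkeeping against the $\Spec$-description of Proposition~\ref{spec}.
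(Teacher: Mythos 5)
Your part (1) is essentially the paper's argument reorganized as a classification of $\Max\big(A\bowtie^{f,g}(J,J')\big)$ via Proposition~\ref{spec}, and it goes through (the point that a maximal ideal $P\nsupseteq J\times J'$ must come from a \emph{maximal} $L$ is exactly what the paper checks inline). The real divergence is in part (2): the paper argues directly with units --- for $(\Rightarrow)$ it observes that $(1,1)-(b,c)(j,j')$ lies in the bi-amalgamation and is a unit there, hence in $B\times C$; for $(\Leftarrow)$ it writes down explicit inverses of $(f(a)+j,g(a)+j')$ for $a$ a unit of $A$ --- whereas you reduce (2) to (1) through the auxiliary claim ``under $A$ local, $f(A)+J$ is local iff $J\subseteq\J(B)$''. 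That reduction is a legitimate alternative route, and your $(\Rightarrow)$ half of the auxiliary claim is correct.

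There is, however, a genuine gap in your $(\Leftarrow)$ half. You deduce ``family (II) is empty'' from ``no maximal ideal of $B$ avoids $J$'' via the correspondence $\Spec(B)\setminus V(J)\leftrightarrow\Spec(f(A)+J)\setminus V(J)$ coming from $(f(A)+J)_j=B_j$, and you justify that this correspondence preserves maximality because the localizations at corresponding primes agree. That justification is invalid: isomorphic localizations at corresponding primes do not transfer maximality (the zero ideals of $\Z_{(2)}$ and of $\Q$ correspond under $\Z_{(2)}\subseteq\Q$ and both localize to $\Q$, yet one is maximal and the other is not; a point can be closed in an open subset of a spectrum without being closed in the whole spectrum). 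What your argument actually requires is that a maximal ideal $M$ of $f(A)+J$ with $J\nsubseteq M$ yields a maximal ideal of $B$ not containing $J$, and this needs a separate argument (e.g.\ any maximal ideal $\mathfrak{n}$ of $B$ containing the corresponding prime $\mathfrak{q}$ satisfies $\mathfrak{n}\cap(f(A)+J)\supseteq M$, hence cannot contain $J$ because $M+J=f(A)+J$). Alternatively the whole step is cleaner done directly: for $u\in J\subseteq\J(B)$ and $r\in f(A)+J$, the inverse $v$ of $1-ru$ in $B$ satisfies $v=1+ruv\in 1+J\subseteq f(A)+J$, so $J\subseteq\J(f(A)+J)$, every maximal ideal of $f(A)+J$ contains $J$, and locality follows from $(f(A)+J)/J\cong A/I_{o}$ being local. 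As written, your proof of part (2)$(\Leftarrow)$ does not close.
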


\begin{proof}
(1) Notice first that if $J=B$, (hence $J'=C$ and) then $A\bowtie^{f,g}(J,J')= B\times C$ which is never local. Assume that $A\bowtie^{f,g}(J,J')$ is local. Then $J\not= B$ and, by Proposition~\ref{prop cong}(2), both  $f(A)+J$ and $g(A)+J'$ are local. Moreover, $I_{o}\not=A$. Therefore, there is $\m\supseteq I_{o}$ maximal in $A$. By Lemma~\ref{s5:lem1}, $\m\bowtie^{f,g}(J,J')$ is the maximal ideal of $A\bowtie^{f,g}(J,J')$. Then, the uniqueness of $\m$ is ensured by Proposition~\ref{prop cong}(1).

Next assume that $J\not= B$ and $f(A)+J$ \& $g(A)+J'$ are local. Let $M$ be a maximal ideal of $A\bowtie^{f,g}(J,J')$. We claim that $J\times J'\subseteq M$. Deny. Then, by Proposition~\ref{spec}(2), there is a unique prime $L$, say, of $f(A)+J$ such that $M=\bar{L}$ and $J\nsubseteq L$. Further, the uniqueness of $L$ and maximality of $M$ force $L$ to be a (in fact, the) maximal ideal of $f(A)+J$. It follows that $J\subseteq L$ (since $J\not= B$), the desired contradiction. Therefore, $$J\times J'\subseteq M.$$ So, by Proposition \ref{spec}(1),  there is a (unique) prime ideal $\m$ of $A$ containing $I_{o}$  such that
$$M=\m\bowtie^{f,g}(J,J').$$
By Lemma~\ref{s5:lem1}, $\m$ is maximal in $A$. By Proposition~\ref{prop cong}(3), $\dfrac{A}{I_{o}}\cong\dfrac{f(A)+J}{J}$ is local with maximal ideal $\dfrac{\m}{I_{o}}$. This forces $M$ to be the unique maximal ideal of $A\bowtie^{f,g}(J,J')$.

(2) ($\Rightarrow$) In this direction we don't need the assumption ``$A$ is local." Assume that $A\bowtie^{f,g}(J,J')$ is local. By (1), necessarily, its maximal ideal contains $J\times J'$. Let $(j,j')\in J\times J'$ and $(b,c)\in B\times C$. Then, $(b,c)(j,j')\in J\times J'$. Thus, $(1,1)-(b,c)(j,j')$ is invertible in $A\bowtie^{f,g}(J,J')$ (and so in $B\times C$). Hence, $J\times J'\subseteq Jac(B\times C)$.

($\Leftarrow$) Assume that $A$ is local and $J\times J'\subseteq Jac(B\times C)$. Let $a$ be a unit of $A$. We claim that $(f(a)+j,g(a)+j')$ is a unit of  $A\bowtie^{f,g}(J,J')$ for every $(j,j')\in J\times J'$. Indeed, $f(a)+j$ and $g(a)+j'$ are, respectively,  units in $B$ and $C$ since $J\times J'\subseteq Jac(B\times C)$. Thus, there exist $u\in B$ and $v\in C$ such that $(f(a)+j)u=1$ and $(g(a)+j')v=1$. Hence,
$$(f(a)+j,g(a)+j')(f(a^{-1})-uf(a^{-1})j,g(a^{-1})-vg(a^{-1})j')=(1,1);$$
that is, $(f(a)+j,g(a)+j')$ is a unit of $A\bowtie^{f,g}(J,J')$. Next, let $(f(a)+j_1,g(a)+j'_1)$ be a nonunit element of $A\bowtie^{f,g}(J,J')$. So, $a$ is a nonunit of $A$. Moreover, for any $(f(b)+j_2,g(b)+j_2')\in A\bowtie^{f,g}(J,J')$, we have $$(1,1)-(f(b)+j_2,g(b)+j'_2)(f(a)+j_1,g(a)+j'_1)=(f(1-ba)+j_3,g(1-ba)+j'_3)$$
for some $j_{3}\in J$ and $j'_{3}\in J'$. Further, $1-ba$ is a unit of $A$ since $A$ is local. Hence, $(1,1)-(f(b)+j_2,g(b)+j'_2)(f(a)+j_1,g(a)+j'_1)$ is a unit of $A\bowtie^{f,g}(J,J')$. This proves that $A\bowtie^{f,g}(J,J')$ is local.
\end{proof}

In view of Example~\ref{aa}, Proposition~\ref{local} recovers the special case of amalgamated algebras and amalgamated duplications, as recorded in the next corollaries.

\begin{corollary}
Under the above notation, the following assertions are equivalent:
\begin{enumerate}
\item $A\bowtie^fJ$ is local;
\item $J\not=B$ and $A$ \& $f(A)+J$ are local;
\item $A$ is local and $J\subseteq Jac(B)$.

\end{enumerate}
\qed\end{corollary}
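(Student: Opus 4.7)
The plan is to reduce this corollary to the two parts of Proposition~\ref{local} by invoking Example~\ref{aa}, which identifies $A\bowtie^f J$ with $A\bowtie^{\iota,f}(I,J)$ where $\iota:=\id_A$ and $I:=f^{-1}(J)$. Once this identification is made, the claimed equivalences should fall out by specializing $g=\iota$, $C=A$ and $J'=I$; the key observation is that in this specialization $g(A)+J'=\iota(A)+I=A$, and the condition $I_o=f^{-1}(J)=I$ becomes trivial. The nontrivial bookkeeping is confined to translating between the conditions ``$J\neq B$'', ``$I\neq A$'' and the Jacobson-radical containment.

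For (1)$\Leftrightarrow$(2), I would apply Proposition~\ref{local}(1) directly: it tells us that $A\bowtie^{\iota,f}(I,J)$ is local iff $I\neq A$ and both $\iota(A)+I$ and $f(A)+J$ are local. Since $\iota(A)+I=A$, this collapses to ``$I\neq A$ and $A$ \& $f(A)+J$ are local''. It remains only to observe that $I=f^{-1}(J)=A$ is equivalent to $f(1)=1\in J$, i.e.\ $J=B$, so $I\neq A$ is equivalent to $J\neq B$. This yields (2) verbatim.

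For (1)$\Leftrightarrow$(3), I would invoke Proposition~\ref{local}(2). The forward direction there requires no assumption on $A$: if $A\bowtie^f J$ is local then $J\times I\subseteq \J(B\times A)=\J(B)\times\J(A)$, so in particular $J\subseteq \J(B)$; moreover the equivalence (1)$\Leftrightarrow$(2) already forces $A$ to be local, which establishes (3). Conversely, assume $A$ is local (with maximal ideal $\m$) and $J\subseteq\J(B)$. If $B\neq 0$ then $J\neq B$, hence $I=f^{-1}(J)\neq A$, so $I\subseteq\m=\J(A)$; together with $J\subseteq\J(B)$ this gives $J\times I\subseteq\J(B\times A)$, and Proposition~\ref{local}(2) delivers (1). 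The degenerate case $B=0$ reduces to $A\bowtie^f J\cong A$, which is local exactly when $A$ is, in agreement with (3).

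The main obstacle is not a deep one: it is only the careful verification that, under condition (3), the second factor $I\subseteq\J(A)$ of the Jacobson containment needed for Proposition~\ref{local}(2) is automatic from $J\subseteq\J(B)$ plus locality of $A$. Once that bridge is established, the corollary is essentially a dictionary translation of Proposition~\ref{local} through Example~\ref{aa}.
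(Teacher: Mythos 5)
Your proposal is correct and is exactly the argument the paper intends: the corollary is stated with no proof beyond the remark that it follows from Proposition~\ref{local} via the identification $A\bowtie^{f}J=A\bowtie^{\iota,f}(I,J)$ of Example~\ref{aa}, which is precisely your specialization $g=\iota$, $C=A$, $J'=I$ with $g(A)+J'=A$. Your explicit bookkeeping (that $I\neq A$ iff $J\neq B$, and that $J\subseteq\J(B)$ together with locality of $A$ forces $I\subseteq\J(A)$, so that $I\times J\subseteq\J(A\times B)$) just fills in the routine details the paper leaves to the reader.
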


\begin{corollary}[{\cite[Corollary 6]{D} \& \cite[Theorem 3.5(1.e)]{DF1} \& \cite[Proposition 2.2]{DF2}}]
Let $A$ be a ring and $I$ a proper ideal of $A$. Then, $A\bowtie I$ is local if and only if $A$ is local.
\qed\end{corollary}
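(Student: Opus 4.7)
The plan is to derive this statement as an immediate consequence of the preceding corollary on amalgamated algebras. Recall from Example~\ref{aa} that the amalgamated duplication $A\bowtie I$ coincides with the amalgamated algebra $A\bowtie^{\id_A} I$, regarding $I$ as an ideal of $B:=A$ via the identity map. So I would specialize the preceding corollary with $f:=\id_A$, $B:=A$, and $J:=I$.

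Under this specialization $f(A)+J=A+I=A$, so the compound condition ``$J\neq B$ and $A$ \& $f(A)+J$ are local'' of item (2) in the preceding corollary collapses to ``$I\neq A$ and $A$ is local''. Since $I$ is assumed to be a proper ideal, $I\neq A$ holds automatically, leaving exactly ``$A$ is local''. This yields the desired equivalence $A\bowtie I$ local $\Leftrightarrow$ $A$ local.

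Equivalently, one may use condition (3) of the preceding corollary, according to which being local is the same as ``$A$ is local and $I\subseteq Jac(A)$'', where now $Jac(B)=Jac(A)$. But if $A$ is local with maximal ideal $\m$, then $Jac(A)=\m$, and every proper ideal $I$ of $A$ is contained in $\m$. Hence the inclusion $I\subseteq Jac(A)$ is redundant, and we recover the same equivalence by a slightly different route.

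There is no real obstacle to anticipate: the entire argument is a direct specialization of the preceding corollary, the only nontrivial identification being $A\bowtie I=A\bowtie^{\id_A}I$, which is exactly the content of Example~\ref{aa}. The ``hard part'', if any, was already absorbed into the proof of Proposition~\ref{local}, from which the amalgamated-algebra corollary and hence this one are easy specializations.
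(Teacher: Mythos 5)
Your proof is correct and is exactly the argument the paper intends: the corollary is stated with a \qed precisely because, via the identification $A\bowtie I=A\bowtie^{\id_A}I$ of Example~\ref{aa}, it is the specialization $f=\id_A$, $B=A$, $J=I$ of the preceding corollary (itself a specialization of Proposition~\ref{local}), under which $f(A)+J=A$ and the extra condition $J\neq B$ (equivalently, $I\subseteq\J(A)$ in formulation (3)) is automatic for a proper ideal of a local ring. Nothing further is needed.
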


Next, we describe the localizations of $A\bowtie^{f,g}(J,J')$ at its prime ideals which contain $J\times J'$. Recall that, given a ring $R$, an ideal $I$ of $R$, and $S$ a multiplicatively closed subset of $R$ with $S\cap I =\emptyset$, then $S+I$  is a multiplicatively closed subset of $R$.

\begin{proposition}\label{localization}
Let $p$ be a prime ideal of $A$ containing $I_{o}$ and let $P:=p\bowtie^{f,g}(J,J')$. Consider the multiplicative subsets $S:=f(A-p)+J$ of $B$ and $S':=g(A-p)+J'$ of $C$. Let $f_p: A_p\rightarrow B_S$ and $g_p: A_p\rightarrow C_{S'}$ be the ring homomorphisms induced by $f$ and $g$. Then: $$f_p^{-1}(J_S)=g_p^{-1}(J'_{S'})=(I_{o})_p$$ and  $$\big(A\bowtie^{f,g}(J,J')\big)_{P}\cong A_p\bowtie^{f_p,g_p}(J_S,J'_{S'}).$$
\end{proposition}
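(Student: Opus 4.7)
The plan is to first verify the kernel identities $f_p^{-1}(J_S)=g_p^{-1}(J'_{S'})=(I_{o})_p$, which are both part of the assertion and a prerequisite for the bi-amalgamation $A_p\bowtie^{f_p,g_p}(J_S,J'_{S'})$ to be defined. The inclusion $(I_{o})_p\subseteq f_p^{-1}(J_S)$ is immediate. For the converse, suppose $f_p(a/s)=f(a)/f(s)\in J_S$; then there exist $j\in J$ and $s_1,s_2\in S$ with $s_2(s_1 f(a)-f(s)j)=0$ in $B$, hence $s_2s_1 f(a)\in J$. Decomposing $s_2s_1=f(u)+k$ with $u\in A-p$ and $k\in J$, one obtains $f(u)f(a)=f(ua)\in J$, so $ua\in I_{o}$, and thus $a/s=(ua)/(us)\in(I_{o})_p$. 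The argument for $g_p$ is symmetric.

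Next I would define
$$\Psi:\big(A\bowtie^{f,g}(J,J')\big)_{P}\longrightarrow A_p\bowtie^{f_p,g_p}(J_S,J'_{S'})$$
by
$$\Psi\!\left(\frac{(f(a)+j,\,g(a)+j')}{(f(s)+k,\,g(s)+k')}\right):=\left(\frac{f(a)+j}{f(s)+k},\;\frac{g(a)+j'}{g(s)+k'}\right)\in B_S\times C_{S'},$$
noting that $(f(s)+k,g(s)+k')\notin P$ forces $s\in A-p$ (since $P=p\bowtie^{f,g}(J,J')$ and $I_{o}\subseteq p$). The identity
$$\frac{f(a)+j}{f(s)+k}-f_p\!\left(\frac{a}{s}\right)=\frac{jf(s)-f(a)k}{f(s)(f(s)+k)}\in J_S$$
and its analog for $g$ show that the image lies in $A_p\bowtie^{f_p,g_p}(J_S,J'_{S'})$, with common $A_p$-part $a/s$. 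Checking that $\Psi$ is independent of the chosen representative and is a ring homomorphism is routine.

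The main remaining task is to prove $\Psi$ is bijective. For injectivity, if $\Psi((N,N')/(D,D'))=0$, there exist $s_1=f(u)+k\in S$ and $s_1'=g(v)+k'\in S'$ with $s_1 N=0$ in $B$ and $s_1'N'=0$ in $C$; then the element $\tilde E:=(f(uv)+f(v)k,\,g(uv)+g(u)k')$, which lies in $A\bowtie^{f,g}(J,J')\setminus P$ (it corresponds to $uv\in A-p$), satisfies $\tilde E_B=f(v)s_1$ and $\tilde E_C=g(u)s_1'$, so $\tilde E\cdot(N,N')=0$, forcing the fraction to vanish. The main obstacle is surjectivity: given $(f_p(a/s)+j/s_1,\,g_p(a/s)+j'/s_1')\in A_p\bowtie^{f_p,g_p}(J_S,J'_{S'})$, write $s_1=f(u)+k$ and $s_1'=g(v)+k'$ with $u,v\in A-p$; clearing denominators by the common multiplier $uv$ and checking compatibility with the bi-amalgamation structure, one verifies that
$$\frac{\big(f(auv)+f(av)k+f(sv)j,\;g(auv)+g(au)k'+g(su)j'\big)}{\big(f(suv)+f(sv)k,\;g(suv)+g(su)k'\big)}\in\big(A\bowtie^{f,g}(J,J')\big)_{P}$$
(whose numerator and denominator correspond respectively to $auv$ and $suv$ in $A$, with $suv\in A-p$) is a preimage under $\Psi$ of the prescribed element, thereby completing the isomorphism.
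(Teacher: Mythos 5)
Your proof is correct, but it follows a genuinely different route from the paper's. The paper dismisses the kernel identity with ``it is easy to show'' and then obtains the isomorphism abstractly: by Proposition~\ref{prop1}, $A_p\bowtie^{f_p,g_p}(J_S,J'_{S'})$ is the fiber product of $f_p(A_p)+J_S\rightarrow A_p/(I_{o})_p$ and $g_p(A_p)+J'_{S'}\rightarrow A_p/(I_{o})_p$, and after observing that $\pi_B\big(A\bowtie^{f,g}(J,J')\setminus P\big)=S$ and $\pi_C\big(A\bowtie^{f,g}(J,J')\setminus P\big)=S'$ it invokes Fontana's general result on localizations of fiber products \cite[Proposition 1.9]{F}. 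You instead build the isomorphism by hand: you prove the kernel identity in full (correctly exploiting that $S$ is multiplicatively closed to write $s_2s_1=f(u)+k$ with $u\notin p$), define $\Psi$ explicitly, and exhibit concrete multipliers and preimages for injectivity and surjectivity; your formulas check out (indeed $\tilde E_B=f(v)s_1$, $\tilde E_C=g(u)s_1'$, and the displayed fraction does map to $\big(f_p(a/s)+j/s_1,\,g_p(a/s)+j'/s_1'\big)$ after cancelling $f(sv)\in S$ and $g(su)\in S'$). The trade-off is clear: the paper's argument is short and reuses the pullback machinery of Section~3 but leans on an external citation and leaves the kernel computation to the reader, while yours is self-contained and elementary at the cost of more bookkeeping. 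One streamlining worth making explicit: since every element of $A\bowtie^{f,g}(J,J')\setminus P$ has the form $(f(s)+k,g(s)+k')$ with $s\notin p$ and therefore maps to a unit of $B_S\times C_{S'}$, the well-definedness of $\Psi$ (which you label ``routine'') follows at once from the universal property of localization, with no representative-by-representative verification needed.
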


\begin{proof}
It is easy to show that $f_p^{-1}(J_S)=g_p^{-1}(J'_{S'})=(I_{o})_p$. Moreover, by Proposition \ref{prop1}, $A_p\bowtie^{f_p,g_p}(J_S,J'_{S'})$ is the fiber product of $\alpha: f_p(A_p)+J_S\rightarrow A_p/(I_{o})_p$ and $\beta: g_p(A_p)+J'_{S'}\rightarrow A_p/(I_{o})_p$. On the other hand, $\pi_B(A\bowtie^{f,g}(J,J')-P)=S$ and $\pi_C(A\bowtie^{f,g}(J,J')-P)=S'$. Then, the fact that $(A\bowtie^{f,g}(J,J'))_{P}$ is isomorphic to $A_p\bowtie^{f_p,g_p}(J_S,J'_{S'})$ follows from \cite[Proposition 1.9]{F}.
\end{proof}

\begin{remark}
If $P$ is a prime ideal of $A\bowtie^{f,g}(J,J')$ which contains $J\times J'$, then by Proposition \ref{spec},
there exists a (unique) prime ideal $p$ (which contains $I_{o}$) such that $P=p\bowtie^{f,g}(J,J')$. Thus, by Proposition \ref{conductor}  and Proposition \ref{localization}, one can obtain
a conductor square of the form:
$$\xymatrix@R=2cm  @C=2cm{ (A\bowtie^{f,g}(J,J'))_{P} \ar@{>>}[d]^{\mu_2}\ar[r]^{\iota_2} & \ar@{>>}[d]^{\mu_1}(f_p(A_p)+J_S)\times (g_p(A_p)+J'_{S'})\\
\displaystyle\frac{A_{p}}{I_{o}A_{p}}\quad \quad \ar[r]^{\iota_1}&  \quad \quad \displaystyle \frac{A_{p}}{I_{o}A_{p}}\times \frac{A_{p}}{I_{o}A_{p}}}$$
\end{remark}


\end{document}